\numberwithin{equation}{section}
\newtheorem{Theorem}{Theorem}[section]
\newtheorem*{Theorem*}{Theorem}
\newtheorem{Proposition}[Theorem]{Proposition}
 { \theoremstyle{definition}
\newtheorem{Definition}[Theorem]{Definition}

\newtheorem{Example}[Theorem]{Example}
\newtheorem{Remark}[Theorem]{Remark} }
\def\bending{\kappa}
\def\R{\mathbb{R}}
\def\curvature{{\rm k}}
\def\SL{{\rm SL}(2,\R)}
\def\PSL{{\rm PSL}(2,\mathbb{C})}
\def\AdS{{\rm AdS}}
\def\sech{{\rm sech}}
\def\csch{{\rm csch}}
\begin{document}
\allowdisplaybreaks

\newcommand{\arXivNumber}{2312.10765}

\renewcommand{\PaperNumber}{009}

\FirstPageHeading

\ShortArticleName{Geometric Transformations on Null Curves in the Anti-de~Sitter 3-Space}

\ArticleName{Geometric Transformations on Null Curves\\ in the Anti-de~Sitter 3-Space}

\Author{Emilio MUSSO~$^{\rm a}$ and \'Alvaro P\'AMPANO~$^{\rm b}$}

\AuthorNameForHeading{E.~Musso and A.~P\'ampano}

\Address{$^{\rm a)}$~Dipartimento di Scienze Matematiche, Politecnico di Torino,\\
\hphantom{$^{\rm a)}$}~Corso Duca degli Abruzzi 24, I-10129 Torino, Italy}
\EmailD{\href{mailto:emilio.musso@polito.it}{emilio.musso@polito.it}}

\Address{$^{\rm b)}$~Department of Mathematics and Statistics, Texas Tech University, Lubbock, TX, 79409, USA}
\EmailD{\href{mailto:alvaro.pampano@ttu.edu}{alvaro.pampano@ttu.edu}}

\ArticleDates{Received September 24, 2024, in final form February 05, 2025; Published online February 12, 2025}

\Abstract{We provide a geometric transformation on null curves in the anti-de~Sitter $3$-space (AdS) which induces the B\"{a}cklund transformation for the KdV equation. In~addition, we show that this geometric transformation satisfies a suitable permutability theorem. We~also illustrate how to implement it when the original null curve has constant bending.}

\Keywords{anti-de~Sitter space; geometric transformations; KdV equation; null curves}

\Classification{37K35; 53B30}

\section{Introduction}

In a previous paper~\cite{MP}, we showed that, in the context of Lorentzian geometry on the anti-de~Sitter $3$-space ($\AdS$, for short) there are flows (referred to as LIEN flows) on null curves that induce bending evolution by any partial differential equation (PDE) in the KdV hierarchy. In~particular, the first order LIEN flow induces evolution by the Korteweg--de Vries (KdV) equation in the form
\begin{equation*}
\partial_t\bending+\partial_s^3\bending-6\bending \partial_s\bending=0.
\end{equation*}
Although similar flows exist on other Lorentzian space forms (see, for instance,~\cite{AGL,MN2}), the specific case of $\AdS$ is special in the sense that the theory of null curves in $\AdS$ is closely related to the theory of star-shaped curves in the centro-affine plane. Indeed, null curves in $\AdS$ can be analyzed by combining together pairs of star-shaped curves in the centro-affine plane~\cite{MP}.

The KdV equation is a classical PDE that originated as a model to understand the propagation of waves on shallow water surfaces~\cite{KV} and is a prototype of completely integrable evolution equations. As a completely integrable PDE the KdV equation has a rich structure including, not only infinitely many conservation laws, but also a B\"{a}cklund transformation~\cite{TU} which generates new solutions from old. In this paper, we use Tabachnikov's transformation on star-shaped curves~\cite{T} and a similar transformation of Terng--Wu~\cite{TW1} to define a geometric transformation (referred to as the \emph{$\mathcal{T}$-transform}) on null curves in $\AdS$ that corresponds to the B\"{a}cklund transformation for their bendings.

We begin, in Section~\ref{section2}, by recalling the basic properties of null curves $\gamma\colon J\subseteq\R\longrightarrow\AdS$ without inflection points in the $\SL$ model for $\AdS$. Such curves possess canonical parameterizations and a third order differential invariant $\bending$, the \emph{bending} (often known as curvature or torsion). In addition, they also have a \emph{spinor frame field} $(F_+,F_-)\colon J\subseteq\R\longrightarrow\SL\times\SL$ defined along them. The components $F_\pm$ of the spinor frame field along $\gamma$ are, precisely, the~canonical central affine frame fields of two star-shaped curves $\eta_\pm$ in the centro-affine plane $\dot{\R}^2=\R^2\setminus\{(0,0)\}$ with central affine curvatures $\curvature_\pm=\bending\pm 1$, respectively~\cite{MP}.

In Section~\ref{section3}, we introduce the $\mathcal{T}$-transform on null curves in $\AdS$. We show, in Theorem~\ref{T2}, how to build such a transformation using a solution of the Riccati equation
\begin{equation*}
	f'+f^2=\bending+\cosh(2\xi),
\end{equation*}
involving the bending $\bending$ of the null curve and a real parameter $\xi\neq 0$. We also prove that the $\mathcal{T}$-transform satisfies a suitable permutability theorem (Theorem~\ref{T3}).

Next, in Section~\ref{section4}, after briefly recalling some basic facts about the B\"{a}cklund transformation for the KdV equation~\cite{TU,TW2}, we use the $\mathcal{T}$-transform to find, in Theorem~\ref{T4}, a geometric realization of it. In addition, such a transformation satisfies a permutability theorem as well. This geometric realization consists on applying the $\mathcal{T}$-transform to a solution of the LIEN flow~\cite{MP}.

Finally, in Section~\ref{section5}, we illustrate how to implement the $\mathcal{T}$-transform when the original null curve has constant bending. A detailed analysis of null curves in $\AdS$ with constant bending has been carried out in~\cite{MP}.

The results of this paper naturally suggest further directions for research. In particular, it is likely that these results (as well as those of~\cite{MP}) can be reformulated in the context of holomorphic null curves in $\PSL$. Since the seminal work of R.~Bryant~\cite{Br1}, it is known that the geometry of holomorphic null curves \cite{Br2,MN3} in $\PSL$ is related via the Bryant's correspondence to the geometry of immersed surfaces with constant mean curvature one (CMC~$1$, for short) in the hyperbolic space $\mathbb{H}^3$. In this case, it is expected that the $\mathcal{T}$-transform corresponds to the~Darboux transform for CMC~$1$ surfaces in $\mathbb{H}^3$. It is also reasonable to ask if the stationary solutions of the holomorphic LIEN flow or its finite-gap solutions can be used to find new embedded CMC~$1$ surfaces with a finite number of ends, in the spirit of~\cite{BPS}.

\section{Preliminaries}\label{section2}

In this section, we will collect the basic information about the $\SL$ model for the anti-de~Sitter $3$-space ($\AdS$) and about the geometric features of null curves in $\AdS$. For a more detailed analysis, we refer the reader to~\cite[Sections 2 and 3]{MP}.

\subsection{The anti-de~Sitter 3-space}

Consider the vector space of $2\times 2$ real matrices $\R^{2,2}$ equipped with the quadratic form ${\rm q}$ of signature $(-,-,+,+)$ defined by
\begin{equation*}{\rm q}(X)=-\det (X)=x_1^2x_2^1-x_1^1x_2^2,\end{equation*}
for each $X=\bigl(x_i^j\bigr)\in\R^{2,2}$. The corresponding inner product will be denoted by $\langle\cdot,\cdot\rangle$ and we will consider the orientation of $\R^{2,2}$ determined by the volume form $\Omega={\rm d} x_1^1\wedge {\rm d} x_2^2\wedge {\rm d} x_1^2\wedge {\rm d} x_2^1$. On~$\Lambda^2\bigl(\R^{2,2}\bigr)$ we define an inner product $\langle\langle\cdot,\cdot\rangle\rangle$ by
\begin{equation*}\langle\langle U\wedge V,W\wedge Z\rangle\rangle=\det \begin{pmatrix} \langle U,W\rangle & \langle U,Z\rangle \\ \langle V,W\rangle & \langle V,Z\rangle \end{pmatrix}.\end{equation*}
A bivector $U\wedge V\in\Lambda^2\bigl(\R^{2,2}\bigr)$ is of type $(-,-)$ if the restriction of the above inner product to ${\rm span}\{U\wedge V\}$ is negative definite, and of type $(-,0)$ if this restriction is nonzero semi-negative definite and degenerate. We fix the bivector of type $(-,-)$
\begin{equation*}U\wedge V=\begin{pmatrix} 1 & 0 \\ 0 & 1 \end{pmatrix}\wedge\begin{pmatrix} 0 & 1 \\ -1 & 0 \end{pmatrix}\in\Lambda^2\bigl(\R^{2,2}\bigr),\end{equation*}
and we say that a bivector $W\wedge Z$ of type $(-,0)$ is \emph{positive} if $\langle\langle U\wedge V,W\wedge Z\rangle\rangle>0$. The set of all positive bivectors of type $(-,0)$ is denoted by $\mathcal{N}_+$.
This choice defines the notion of time orientation in $\R^{2,2}$.

Let $\SL$ be the \emph{special linear group} of degree $2$ over $\R$ (i.e.,
the group consisting of the~${2\times 2}$ real matrices of determinant $1$ with the ordinary matrix multiplication). The restriction of the inner product $\langle\cdot,\cdot\rangle$ of $\R^{2,2}$ to the special linear group $\SL$ gives a Lorentzian metric of constant sectional curvature $-1$. The special linear group $\SL$ endowed with the Lorentzian metric induced by $\langle\cdot,\cdot\rangle$ is a model for the \emph{anti-de~Sitter $3$-space} ($\AdS$).

\begin{Remark}
Throughout this paper, it will be implicitly assumed that the model for $\AdS$ is the special linear group $\SL$. For visualization purposes, we will identify $\AdS$ with the open solid torus in $\mathbb{R}^3$ swept out by the rotation of the (open) unit disc of the $Oxz$-plane centered at $(2,0,0)$ around the $Oz$-axis.
\end{Remark}

Consider the normal vector field $Q\colon X\in\AdS\longmapsto-2X\in\R^{2,2}$ and the orientation in $\AdS$ determined by the volume form $i_Q\Omega$. Given a null tangent vector $T\in T_X\AdS$, the bivector~${X\wedge T}$ is of type $(-,0)$. We define a time-orientation on $\AdS$ by declaring $T$ to be future-directed if~${X\wedge T\in\mathcal{N}_+}$.

\subsection{Geometry of null curves}

Let $J\subseteq\R$ be an open interval. A smooth immersed curve $\gamma\colon J\subseteq \mathbb{R}\longrightarrow \AdS$ is \emph{null} if its velocity vector $\gamma'(s)$ is a null (or, light-like) vector for each $s\in J$. In other words, if $\langle \gamma'(s),\gamma'(s)\rangle=0$ for all $s\in J$. A null curve is \emph{future-directed} if the bivector $\gamma\wedge\gamma'$ of type $(-,0)$ is positive, that~is, if $\gamma\wedge\gamma'\in\mathcal{N}_+$.

Let $\gamma\colon J\subseteq\R\longrightarrow\AdS$ be a future-directed null curve without inflection points (i.e., such that $\gamma'\wedge\gamma''\neq 0$ holds). Since $\gamma'\wedge\gamma''\neq 0$, then $\gamma''$ is a space-like vector. Using the terminology of~\cite{MP}, we say that $\gamma$ is parameterized by the \emph{proper time} if $\langle \gamma''(s),\gamma''(s)\rangle=4$ for every $s\in J$. The \emph{bending} of a curve $\gamma$ parameterized by the proper time is defined by
\begin{equation*}\bending(s)=-\frac{1}{16}\bigl\langle\gamma'''(s),\gamma'''(s)\bigr\rangle.\end{equation*}

\begin{Remark}\label{assumptions} From now on, we assume that all our curves $\gamma\colon J\subseteq\R\longrightarrow\AdS$ are null, future-directed, parameterized by their proper time $s\in J$, and have no inflection points. In this setting, the bending $\bending(s)$ completely determines the null curve $\gamma$, up to congruence.
\end{Remark}

Let $\gamma\colon J\subseteq\R\longrightarrow\AdS$ be a null curve with bending $\bending$. The \emph{spinor frame field} along $\gamma$ is a~lift $(F_+,F_-)\colon J\subseteq\R\longrightarrow\SL\times\SL$ such that
\begin{gather}
	{\rm d}F_+ = F_+ \begin{pmatrix} 0 & \bending+1 \\ 1 & 0 \end{pmatrix}{\rm d} s,\label{dF+}\\
	{\rm d}F_- = F_- \begin{pmatrix} 0 & \bending-1 \\ 1 & 0 \end{pmatrix}{\rm d} s.\label{dF-}
\end{gather}
The spinor frame field is defined up to a sign. The equations \eqref{dF+} and \eqref{dF-} are the spinorial counterpart of the classical Frenet-type equations and, hence, we will refer to them as the~\emph{spinorial Frenet-type equations} of $\gamma$. Observe that the null curve $\gamma$ can be recovered from the~spinor frame field $(F_+,F_-)$ as
$\gamma=F_+F_-^{-1}$.

\begin{Remark} The frame $(F_+,F_-)$ can be explicitly obtained as a lifting to $\SL\times\SL$ of a moving frame adapted to the curve $\gamma$, with values in the automorphism group of $\AdS$. For the details of the construction, we refer to~\cite{MP}.
\end{Remark}

In~\cite{MP}, employing the spinor frame field $(F_+,F_-)$ along $\gamma$, we related null curves in $\AdS$ to a~suitable pair of star-shaped curves in the centro-affine plane $\dot{\R}^2=\R^2\setminus\{(0,0)\}$. For later use, we state this result here.

\begin{Theorem}[{\cite[Theorem 3.3]{MP}}]\label{relation} Let $\gamma\colon J\subseteq\R\longrightarrow\AdS$ be a null curve with bending $\bending$ and spinor frame field $(F_+,F_-)$ along it $($which can be obtained by solving the spinorial Frenet-type equations \eqref{dF+} and \eqref{dF-}$)$. Then, the first column vectors of $F_\pm$ form a pair of star-shaped curves ${(\eta_+,\eta_-)}$ in $\dot{\R}^2$ with central affine curvatures\footnote{Our notion of central affine curvature coincides with that of Terng--Wu~\cite{TW1} and it has the opposite sign of that of Pinkall~\cite{P}.}
$\curvature_+=\bending+1$, $\curvature_-=\bending-1$, respectively.
	
	Conversely, let $(\eta_+,\eta_-)$ be a pair of star-shaped curves with central affine curvatures~$\curvature_+$ and~$\curvature_-$ related by $(\curvature_+-\curvature_-)/2=1$, and canonical central affine frame fields $F_+=\bigl(\eta_+,\eta_+'\bigr)$ and~${F_-=\bigl(\eta_-,\eta_-'\bigr)}$, respectively. Then, $\gamma=F_+F_-^{-1}$,
	is a null curve in $\AdS$ with bending $\bending=(\curvature_++\curvature_-)/2$ and spinor frame field $(F_+,F_-)$ along it.
\end{Theorem}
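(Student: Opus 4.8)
The plan is to derive both assertions from a single computation: unwinding the matrix form \eqref{dF+}--\eqref{dF-} of the Frenet equations, i.e.\ $F_\pm'=F_\pm A_\pm$ with $A_\pm=\bigl(\begin{smallmatrix}0&\bending\pm1\\1&0\end{smallmatrix}\bigr)$, both column by column and as an identity for the product $\gamma=F_+F_-^{-1}$. For the first assertion I would write $F_+$ in terms of its column vectors, $F_+=(\eta_+,\zeta_+)$. Reading \eqref{dF+} columnwise gives at once $\zeta_+=\eta_+'$ and $\eta_+''=(\bending+1)\eta_+$, while $\det(\eta_+,\eta_+')=\det F_+=1$ because $F_+\in\SL$. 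Hence $\eta_+$ and $\eta_+'$ are everywhere linearly independent, so $\eta_+$ is a star-shaped curve in $\dot{\R}^2$ already written in its canonical parameterization, $F_+=(\eta_+,\eta_+')$ is its canonical central affine frame field, and---in the Terng--Wu sign convention, for which such a frame obeys $F'=F\bigl(\begin{smallmatrix}0&\curvature\\1&0\end{smallmatrix}\bigr)$---its central affine curvature is $\curvature_+=\bending+1$. The identical reading of \eqref{dF-} yields $\curvature_-=\bending-1$, so the compatibility $(\curvature_+-\curvature_-)/2=1$ is automatic. The only point needing care here is fixing the sign convention for $\curvature$, as flagged in the footnote.

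For the converse, set $\bending=(\curvature_++\curvature_-)/2$; the hypothesis $(\curvature_+-\curvature_-)/2=1$ then gives $\curvature_\pm=\bending\pm1$, so the central affine frame equations for $F_\pm=(\eta_\pm,\eta_\pm')$ are exactly $F_\pm'=F_\pm A_\pm$ with $A_\pm$ as above. Put $\gamma=F_+F_-^{-1}\colon J\to\SL=\AdS$ (this lands in $\SL$ since $\det\gamma=\det F_+/\det F_-=1$) and differentiate, using $(F_-^{-1})'=-A_-F_-^{-1}$; the inner $F_-^{-1}F_-$ collapses and a short iteration gives
\begin{gather*}
\gamma'=F_+(A_+-A_-)F_-^{-1}=F_+\begin{pmatrix}0&2\\0&0\end{pmatrix}F_-^{-1},\qquad
\gamma''=F_+\begin{pmatrix}-2&0\\0&2\end{pmatrix}F_-^{-1},\\
\gamma'''=F_+\begin{pmatrix}0&4\bending\\-4&0\end{pmatrix}F_-^{-1}.
\end{gather*}
Since $X\mapsto F_+XF_-^{-1}$ is a linear automorphism of $\R^{2,2}$ and ${\rm q}(X)=-\det X$ with $\det F_\pm=1$, every required property is read off at once: $\gamma'$ never vanishes, so $\gamma$ is an immersion; ${\rm q}(\gamma')=-\det\gamma'=0$, so $\gamma$ is null; $\gamma'\wedge\gamma''\neq0$ because $\bigl(\begin{smallmatrix}0&2\\0&0\end{smallmatrix}\bigr)$ and $\bigl(\begin{smallmatrix}-2&0\\0&2\end{smallmatrix}\bigr)$ are linearly independent, so $\gamma$ has no inflection points; $\langle\gamma'',\gamma''\rangle=-\det\gamma''=4$, so $s$ is the proper time; and $-\frac{1}{16}\langle\gamma''',\gamma'''\rangle=\frac{1}{16}\det\gamma'''=\bending$, so the bending of $\gamma$ equals $\bending$.

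It remains to check the time orientation and to identify $(F_+,F_-)$ with the spinor frame field. For the time orientation one expands $\langle\langle U\wedge V,\gamma\wedge\gamma'\rangle\rangle$ for the fixed reference bivector $U\wedge V$, substitutes $\gamma=F_+F_-^{-1}$ and $\gamma'=F_+\bigl(\begin{smallmatrix}0&2\\0&0\end{smallmatrix}\bigr)F_-^{-1}$, and uses $\det F_\pm=1$ to see that the result is positive, i.e.\ $\gamma\wedge\gamma'\in\mathcal{N}_+$ and $\gamma$ is future-directed; the ambient orientation of $\AdS$ is treated the same way. Finally, $(F_+,F_-)$ satisfies \eqref{dF+}--\eqref{dF-} with the bending $\bending$ of $\gamma$ just computed and reconstructs $\gamma=F_+F_-^{-1}$; since the spinor frame field is characterized by precisely these two properties, up to sign, it follows that $(F_+,F_-)$ is the spinor frame field along $\gamma$. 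I expect the real work to lie not in the telescoping identity---a couple of lines---but in the bookkeeping around conventions: pinning down the Terng--Wu sign of $\curvature$, and verifying that the explicit time-orientation and orientation data $({\rm q},\Omega,Q,\mathcal{N}_+)$ make all the signs come out as stated.
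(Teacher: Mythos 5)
This statement is quoted in the paper as \cite[Theorem~3.3]{MP} and is not proved here, so there is no in-paper argument to compare against; your proposal has to be judged on its own. It is correct, and it is the natural direct computation: reading the spinorial Frenet equations columnwise gives $\eta_\pm'' = (\bending\pm1)\eta_\pm$ and $\det(\eta_\pm,\eta_\pm')=1$, which is exactly the Terng--Wu normalization, and the telescoping differentiation of $\gamma=F_+F_-^{-1}$ yields the matrices you list, from which nullity, absence of inflection points, proper-time parameterization, and the bending all follow via ${\rm q}=-\det$ and $\det F_\pm=1$. The one place you are a little glib is the future-directedness check: expanding $\langle\langle U\wedge V,\gamma\wedge\gamma'\rangle\rangle$ does not ``use $\det F_\pm=1$'' pointwise in any direct way; the clean argument is that $X\mapsto F_+XF_-^{-1}$ lies in the identity component of the isometry group of $\bigl(\R^{2,2},{\rm q}\bigr)$ (since $\SL\times\SL$ is connected and the action at $F_\pm={\rm Id}$ is trivial), hence preserves $\mathcal{N}_+$, reducing the check to the single constant bivector ${\rm Id}\wedge\bigl(\begin{smallmatrix}0&2\\0&0\end{smallmatrix}\bigr)$. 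With that point made precise, the proof is complete.
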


\begin{Remark} Following the terminology introduced in~\cite{MP}, the pair of star-shaped curves ${(\eta_+,\eta_-)}$ will be referred to as the pair of cousins associated with the null curve $\gamma$.
\end{Remark}

We briefly recall here the case of closed null curves with constant bending (for more details, see~\cite[Example 3.5]{MP}).

\begin{Example}\label{example} Consider a closed null curve $\gamma\colon J=\R\longrightarrow\AdS$ with constant bending $\bending$. Since~$\gamma$ is closed, this implies that the spinor frame field $(F_+,F_-)$ along $\gamma$ is periodic and that the least periods of $F_+$ and $F_-$ are commensurable. That is, the bending of $\gamma$ must be
	\begin{equation*}\bending\equiv\bending_{m,n}=-\frac{m^2+n^2}{m^2-n^2},\end{equation*}
	where $m>n$ are two relatively prime natural numbers. Furthermore,
	\begin{enumerate}\itemsep=0pt
		\item If $m+n$ is even, the curve $\gamma$ represents a torus knot of type $((m-n)/2,(n+m)/2)$.
		\item If $m+n$ is odd, the curve $\gamma$ represents a torus knot of type $(m-n,n+m)$.
	\end{enumerate}
	In Figure~\ref{constant} we illustrate a closed null curve with constant bending as well as its associated pair of star-shaped cousins (other illustrations of closed null curves of this family can be found in~\mbox{\cite[Figures~2 and~3]{MP}}).
\end{Example}

\begin{figure}[h]\centering
			\includegraphics[height=4cm,width=4cm]{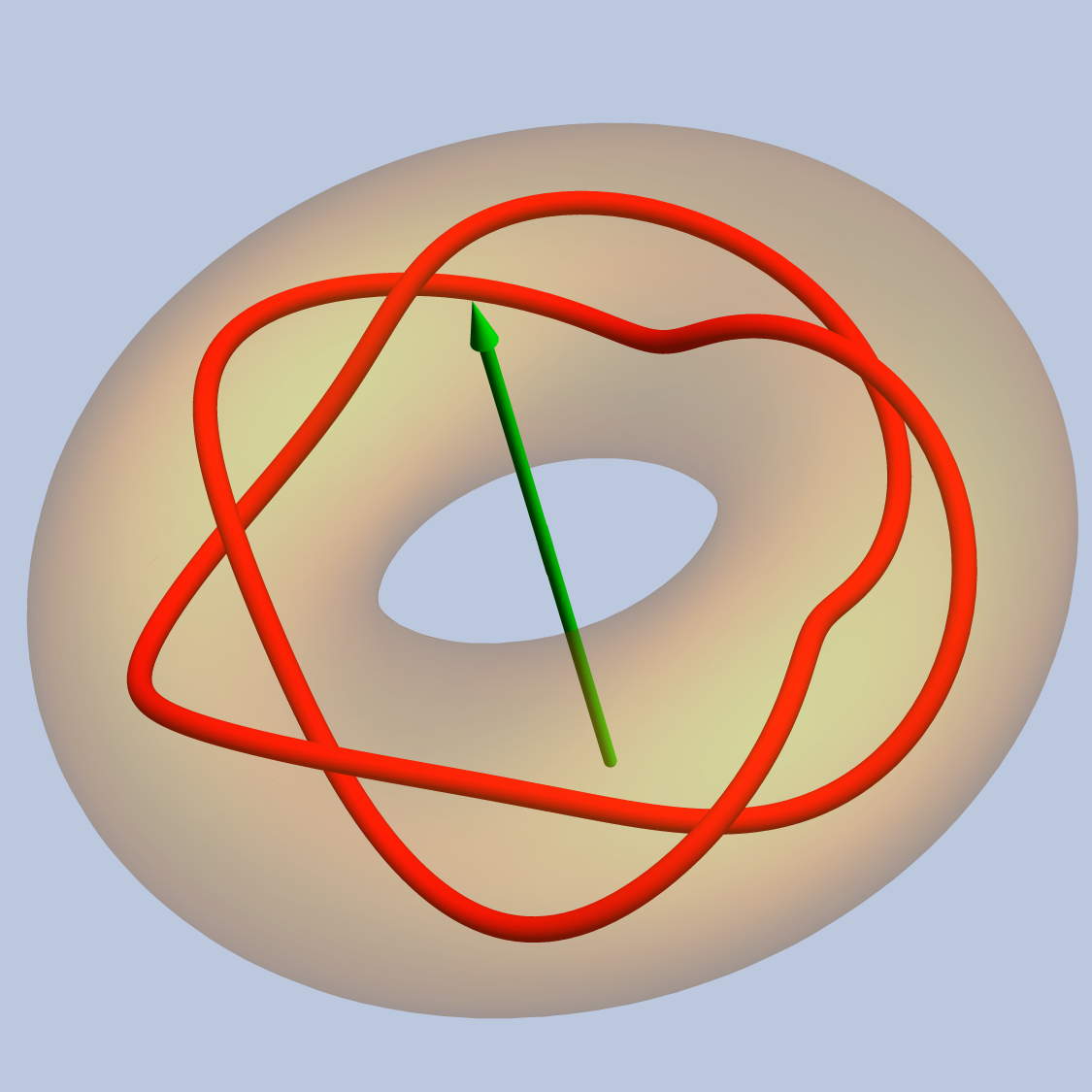}\quad\quad\quad
			\includegraphics[height=4cm,width=4cm]{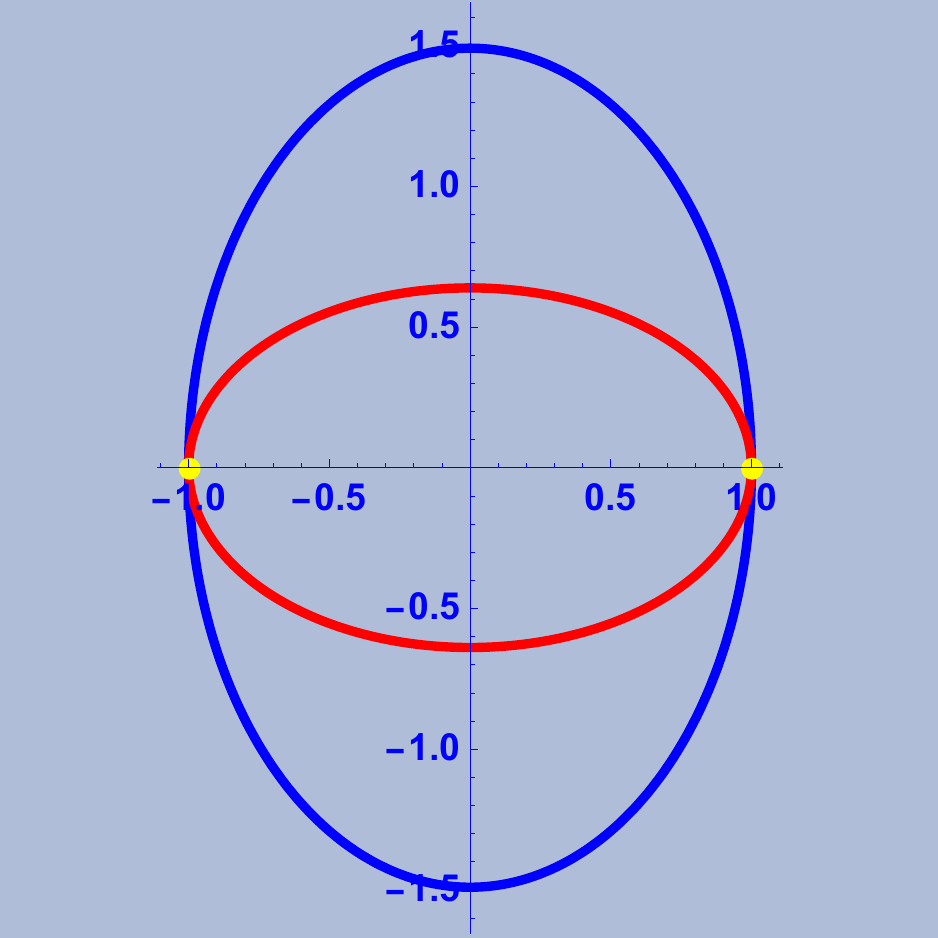}
\caption{Left: The closed null curve $\gamma$ with constant bending $\bending_{7,3}$ (see Example~\ref{example}). This curve represents a torus knot of type $(2,5)$. Right: The associated pair of star-shaped cousins $(\eta_+,\eta_-)$ in blue and red, respectively.} \label{constant}
\end{figure}

\section[The T-transform]{The $\boldsymbol{\mathcal{T}}$-transform}\label{section3}

In this section, we will introduce a geometric transformation on null curves in $\AdS$, which we call the \emph{$\mathcal{T}$-transform}, and show how to build it using a solution of the Riccati equation.

In order to define the $\mathcal{T}$-transform on null curves, we will employ Tabachnikov's transformation\footnote{In~\cite{T}, a symmetric relation is introduced for circle maps by considering the continuous version of a relation for ideal polygons in the hyperbolic plane. Extending this relation to the lifts of these circle maps to $\mathbb{R}^2$, a~transformation is defined. This transformation is, essentially, our $\mathcal{T}$-transform on star-shaped curves.} on star-shaped curves~\cite{T} and translate it to null curves through the associated pair of cousins.

\begin{Definition}\label{5.1} Two star-shaped curves $\eta,\widetilde{\eta}\colon J\subseteq\R\longrightarrow\dot{\R}^2$ parameterized by the central affine arc length are said to be \emph{$\mathcal{T}$-transforms of each other}\footnote{Clearly, from Definition~\ref{5.1} it follows that the $\mathcal{T}$-transform is reciprocal. That is, if $\eta$ is a $\mathcal{T}$-transform of $\widetilde{\eta}$, then $\widetilde{\eta}$ is also a $\mathcal{T}$-transform of $\eta$. Equivalently, $\eta$ and $\widetilde{\eta}$ are $\mathcal{T}$-transforms of each other. We will use the different terms indistinctively.} if $\det (\eta,\widetilde{\eta})$ is a nonzero constant.
\end{Definition}

\begin{Remark} The $\mathcal{T}$-transform of star-shaped curves is a geometric transformation, in the sense that two star-shaped curves are $\mathcal{T}$-transforms of each other if the area of the triangles with vertices $O=(0,0)$, $\eta(s)$, and $\widetilde{\eta}(s)$ is constant for every $s\in J$. In Figures~\ref{transform+} and~\ref{transform-}, we show a couple of examples of star-shaped curves and their corresponding $\mathcal{T}$-transforms and illustrate this geometric property.
\end{Remark}

\begin{figure}[t]\centering
			\includegraphics[height=4cm,width=4cm]{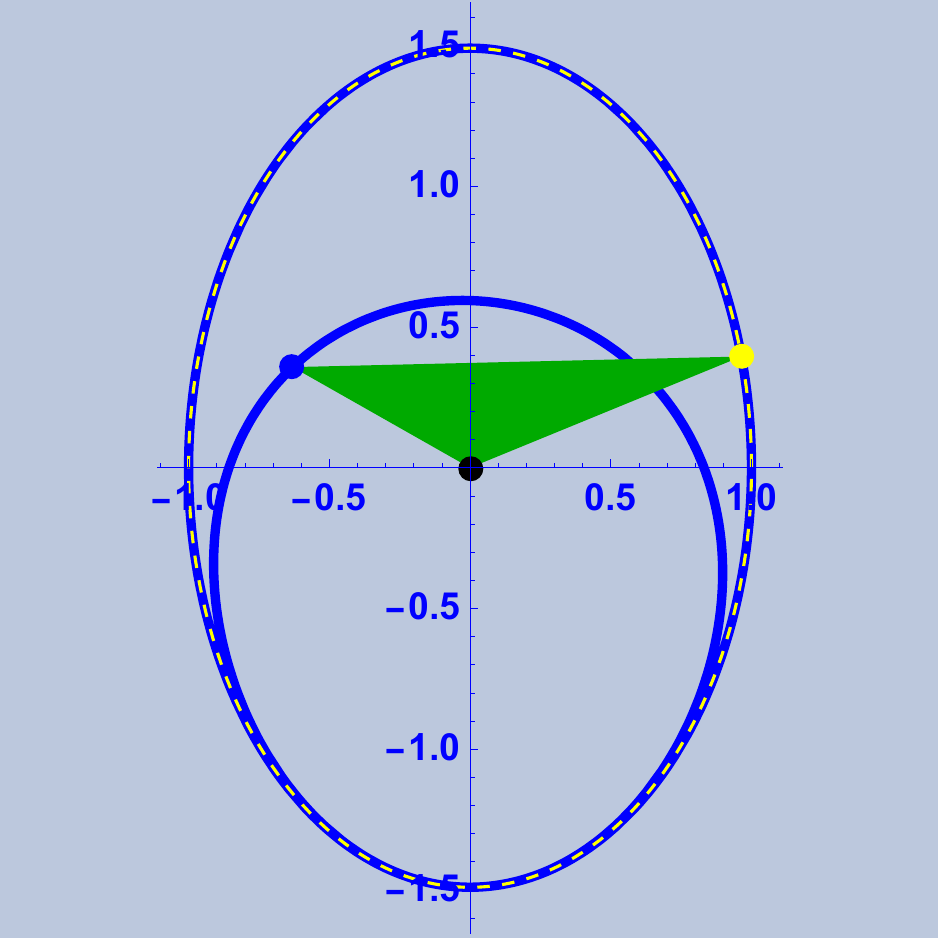}\quad\quad\quad
			\includegraphics[height=4cm,width=4cm]{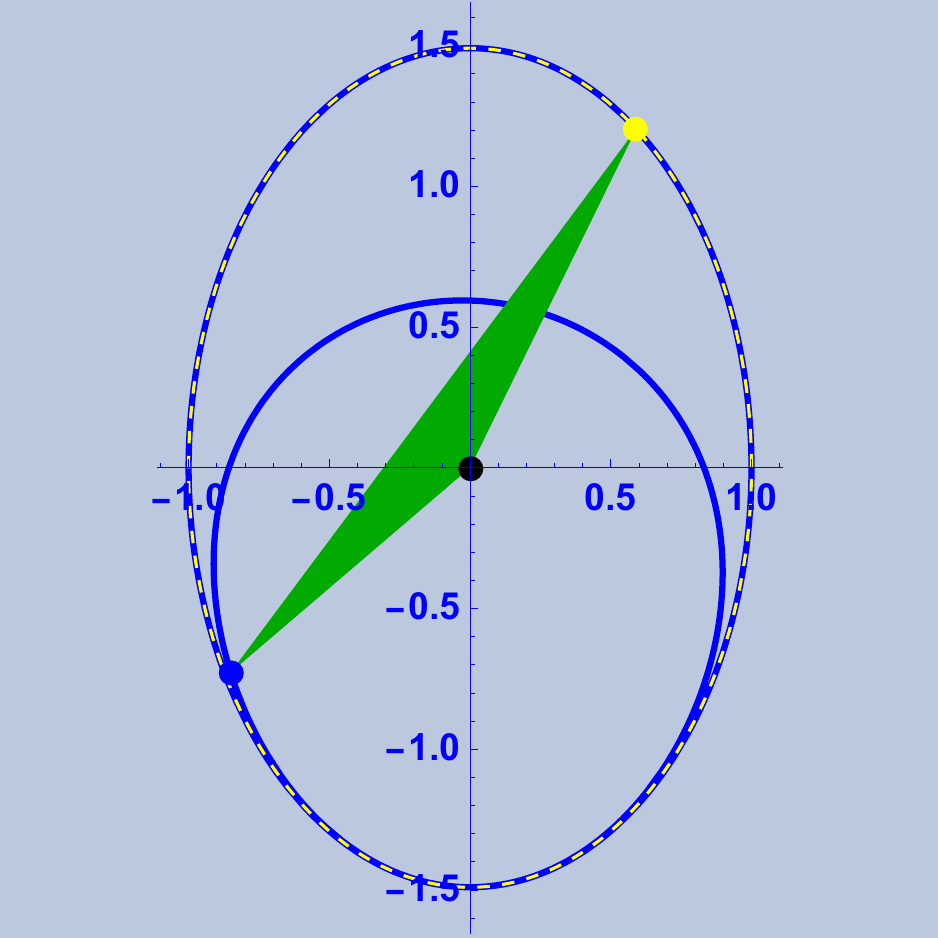}
\caption{The star-shaped curve $\eta_+$ of Figure~\ref{constant}, in blue and dotted in yellow, and the $\mathcal{T}$-transform~$\widetilde{\eta}_+$ of~$\eta_+$ in plain blue. The green triangles, which have the same area, are the ones with vertices $O$ (in~black), $\eta_+(s)$ (in~yellow) and \smash{$\widetilde{\eta}_+(s)$} (in blue). For each figure the triangle is shown at different values of the parameter $s\in J$.} \label{transform+}
\end{figure}

\begin{figure}[t]\centering
			\includegraphics[height=4cm,width=4cm]{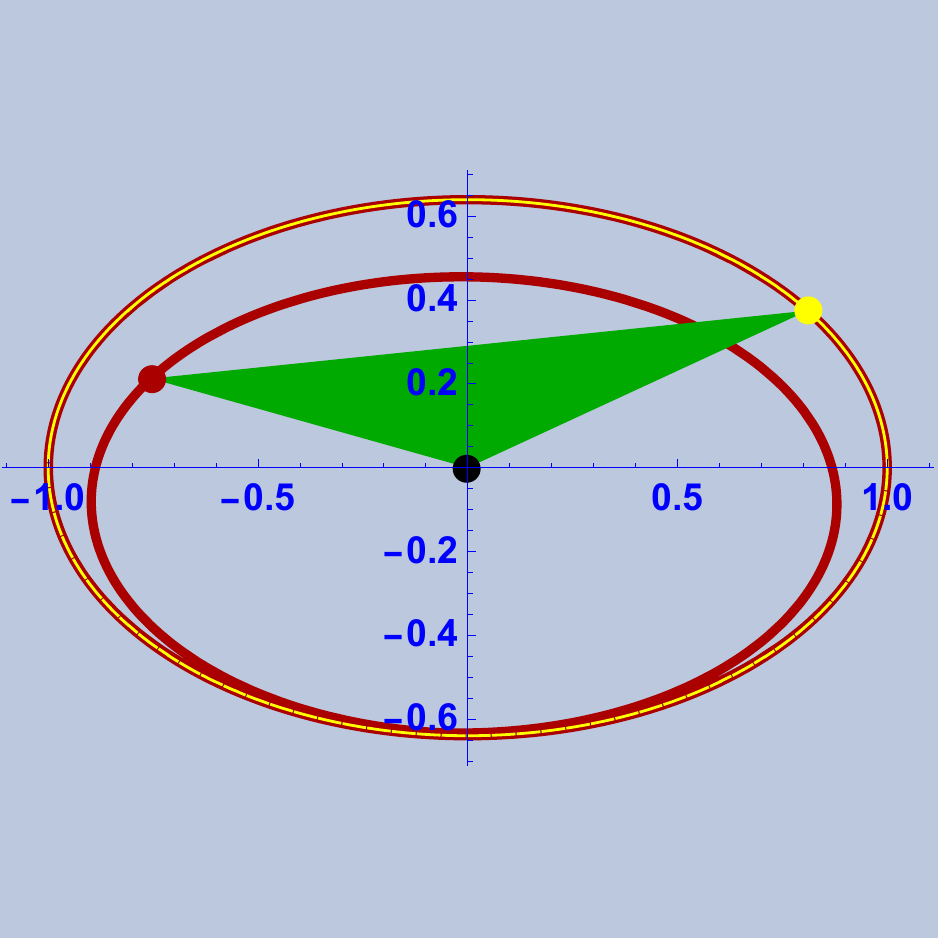}\quad\quad\quad
			\includegraphics[height=4cm,width=4cm]{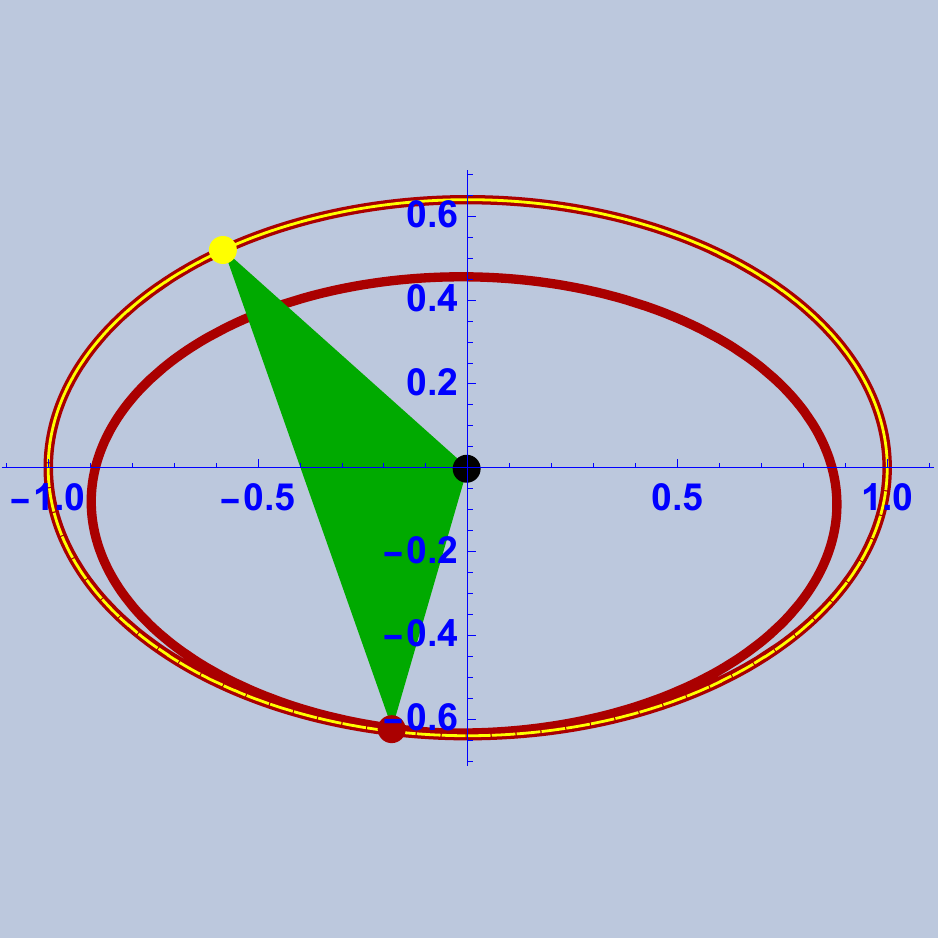}
\caption{The star-shaped curve $\eta_-$ of Figure~\ref{constant}, in red and dotted in yellow, and the $\mathcal{T}$-transform~$\widetilde{\eta}_-$ of~$\eta_-$ in plain red. The green triangles, which have the same area, are the ones with vertices $O$ (in~black), $\eta_-(s)$ (in yellow) and $\widetilde{\eta}_-(s)$ (in red). For each figure the triangle is shown at different values of the parameter $s\in J$.} \label{transform-}
\end{figure}

\begin{Definition}\label{5.3}
	Let $\gamma,\widetilde{\gamma}\colon J\subseteq\R\longrightarrow\AdS$ be two null curves with associated pairs of cousins $(\eta_+,\eta_-)$ and $(\widetilde{\eta}_+,\widetilde{\eta}_-)$, respectively. We say that $\gamma$ and $\widetilde{\gamma}$ are \emph{$\mathcal{T}$-transforms of each other} if $\eta_+$ is a $\mathcal{T}$-transform of $\widetilde{\eta}_+$ and $\eta_-$ is a $\mathcal{T}$-transform of $\widetilde{\eta}_-$.
\end{Definition}

\begin{figure}[t]\centering
			\includegraphics[height=4cm,width=4cm]{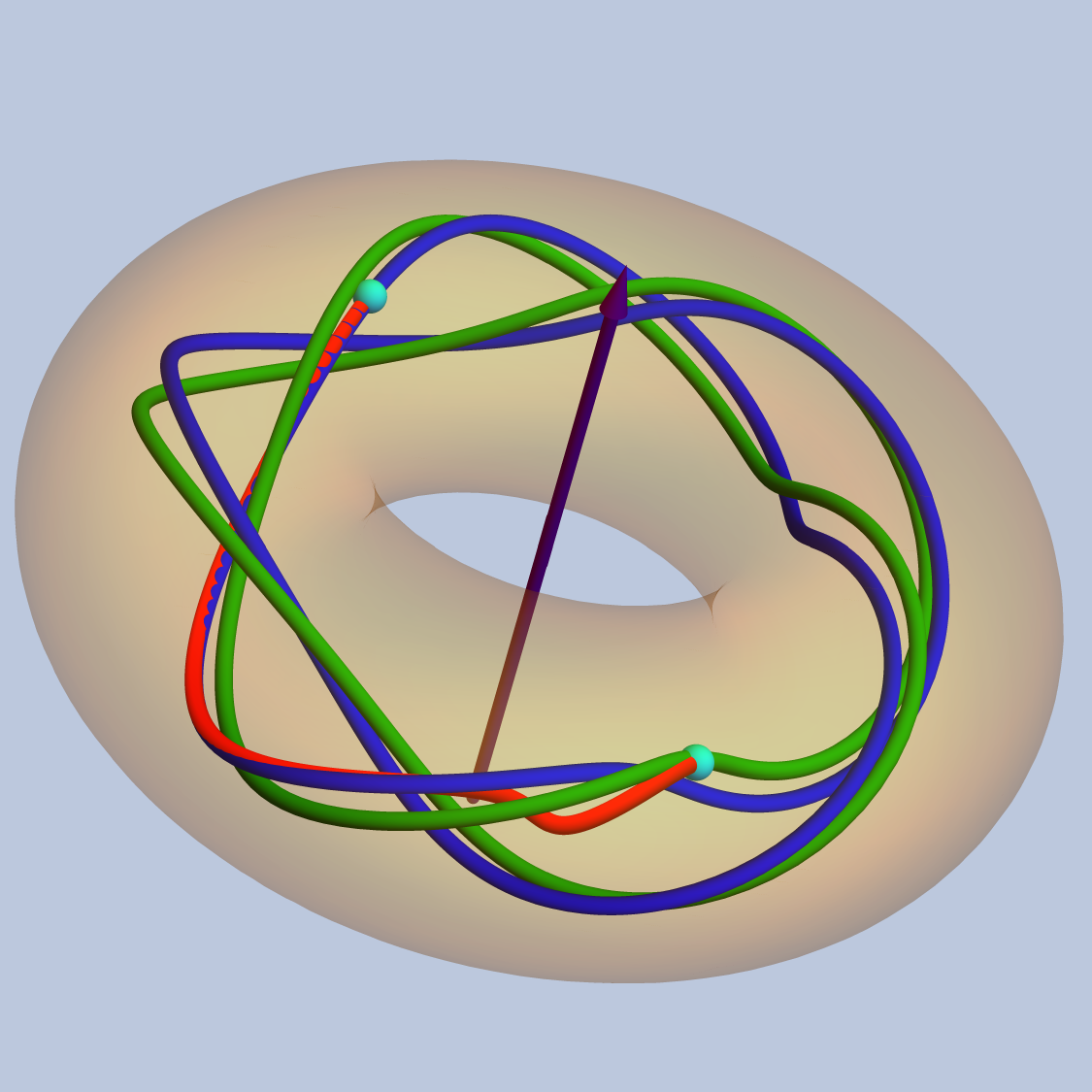}\quad\quad\quad
			\includegraphics[height=4cm,width=4cm]{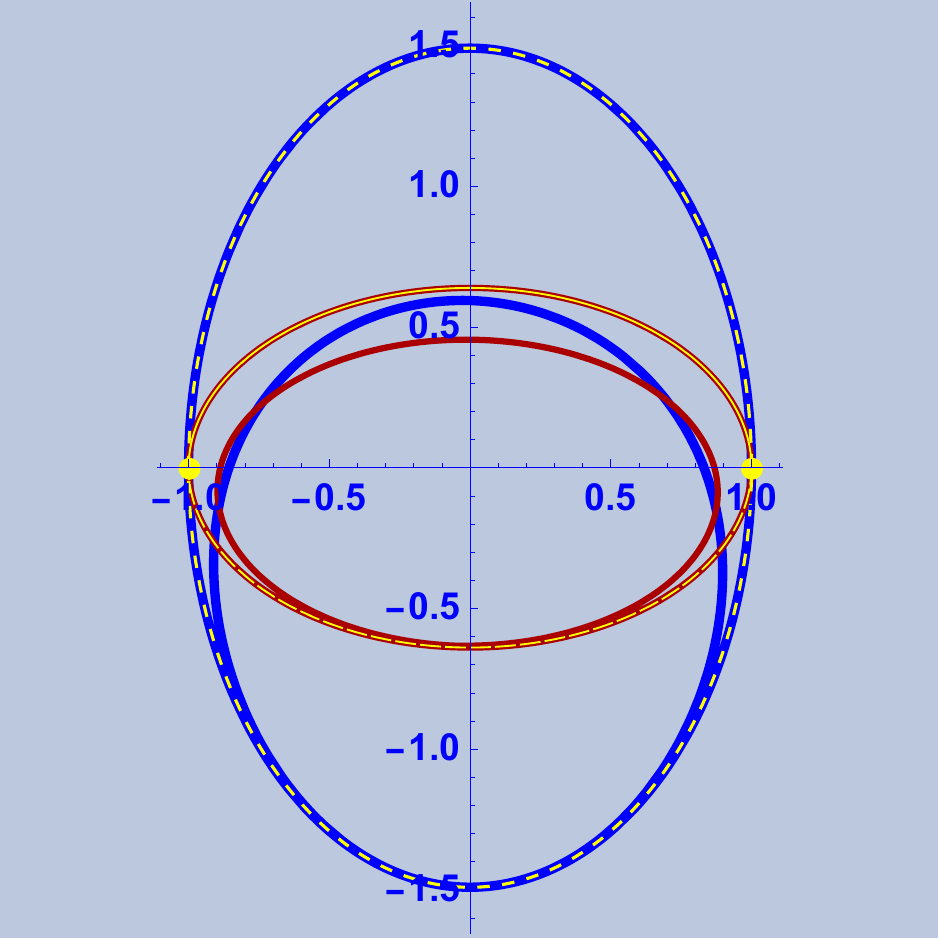}
\caption{Left: The $\mathcal{T}$-transform (for $\chi=0$) $\widetilde{\gamma}=\mathcal{T}_{\xi,f}(\gamma)$ of the null curve $\gamma$ with constant bending $\bending_{7,3}$ (see Figure~\ref{constant}). The parameter $\xi\neq 0$ of the $\mathcal{T}$-transform is $\xi=1.01$ and the transforming function $f$ is the solution of \eqref{Riccati} with initial condition $0.1$. Right: The associated pair of star-shaped cousins $(\widetilde{\eta}_+,\widetilde{\eta}_-)$ in plain blue and red, respectively. The curves dotted in yellow are the corresponding $\mathcal{T}$-transforms, namely, $\eta_+$ and $\eta_-$ (cf.\ Figures~\ref{transform+} and~\ref{transform-}).} \label{transform}
\end{figure}

\begin{Remark}
	In Figures~\ref{transform+} and~\ref{transform-}, we have shown the star-shaped $\mathcal{T}$-transforms $\widetilde{\eta}_+$ and $\widetilde{\eta}_-$ of the pair of cousins $(\eta_+,\eta_-)$ associated with the null curve $\gamma$ with constant bending $\bending_{7,3}$ (see Figure~\ref{constant}). Hence, according to Definition~\ref{5.3}, the null curve $\widetilde{\gamma}$ with associated pair of cousins~$(\widetilde{\eta}_+,\widetilde{\eta}_-)$ is a~$\mathcal{T}$-transform of $\gamma$. The curve $\widetilde{\gamma}$ is shown in Figure~\ref{transform}.
\end{Remark}

Using the definition of the $\mathcal{T}$-transform on star-shaped curves, we next find an equation satisfied by null curves that are $\mathcal{T}$-transform of each other.

\begin{Proposition}\label{T1} Let $\gamma,\widetilde{\gamma}\colon J\subseteq\R\longrightarrow\AdS$ be two null curves with associated pairs of cousins $(\eta_+,\eta_-)$ and $(\widetilde{\eta}_+,\widetilde{\eta}_-)$, respectively. If $\gamma$ and $\widetilde{\gamma}$ are $\mathcal{T}$-transforms of each other, then
	\begin{equation}\label{chi}
		\det \bigl(\eta_+,\widetilde{\eta}_+\bigr)\det \bigl(\eta_-,\widetilde{\eta}_-'\bigr)-\det \bigl(\eta_-,\widetilde{\eta}_-\bigr)\det \bigl(\eta_+,\widetilde{\eta}_+'\bigr)=\chi
	\end{equation}
	holds for some constant $\chi\in\R$.
\end{Proposition}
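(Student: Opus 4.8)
The plan is to write the cousins $\widetilde{\eta}_\pm$ of $\widetilde{\gamma}$ in terms of the moving frames $F_\pm=(\eta_\pm,\eta_\pm')$ of the cousins of $\gamma$, so that the left-hand side of~\eqref{chi} collapses to a single scalar function, and then to show that this function is constant by differentiating it and feeding in two structural facts: that $\widetilde{\eta}_\pm$ is parameterized by the central affine arc length, and that the pair $(\widetilde{\eta}_+,\widetilde{\eta}_-)$ genuinely assembles into the null curve $\widetilde{\gamma}$.

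First I would invoke Definition~\ref{5.1}: there are nonzero constants $a_\pm\in\R$ with $\det(\eta_\pm,\widetilde{\eta}_\pm)=a_\pm$. Since $F_\pm=(\eta_\pm,\eta_\pm')\in\SL$, the vectors $\eta_\pm(s),\eta_\pm'(s)$ form a basis of $\R^2$ with $\det(\eta_\pm,\eta_\pm')=1$, so I may write $\widetilde{\eta}_\pm=\alpha_\pm\,\eta_\pm+a_\pm\,\eta_\pm'$ for smooth functions $\alpha_\pm$, the coefficient of $\eta_\pm'$ being forced to equal $\det(\eta_\pm,\widetilde{\eta}_\pm)=a_\pm$. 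Reading $\eta_\pm''=\curvature_\pm\,\eta_\pm$ off the spinorial Frenet-type equations~\eqref{dF+}--\eqref{dF-}, with $\curvature_\pm=\bending\pm1$, and differentiating, I obtain $\widetilde{\eta}_\pm'=(\alpha_\pm'+a_\pm\curvature_\pm)\eta_\pm+\alpha_\pm\,\eta_\pm'$, and hence $\det(\eta_\pm,\widetilde{\eta}_\pm')=\alpha_\pm$. Thus (recalling $\det(\eta_\pm,\widetilde{\eta}_\pm)=a_\pm$) the left-hand side of~\eqref{chi} equals $a_+\alpha_--a_-\alpha_+$, and everything reduces to showing this quantity is constant.

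Next I would determine $\alpha_\pm'$. Since $\widetilde{\eta}_\pm$ is parameterized by the central affine arc length, $\det(\widetilde{\eta}_\pm,\widetilde{\eta}_\pm')=1$; expanding this determinant in the basis $(\eta_\pm,\eta_\pm')$ gives $\alpha_\pm^2-a_\pm\alpha_\pm'-a_\pm^2\curvature_\pm=1$, that is, $\alpha_\pm'=(\alpha_\pm^2-1)/a_\pm-a_\pm\curvature_\pm$. Substituting this into $(a_+\alpha_--a_-\alpha_+)'=a_+\alpha_-'-a_-\alpha_+'$ and using $\curvature_+-\curvature_-=2$, the derivative collapses to
\begin{equation*}
\bigl(a_+\alpha_--a_-\alpha_+\bigr)'=a_+a_-\left(\frac{\alpha_-^2-1}{a_-^2}-\frac{\alpha_+^2-1}{a_+^2}+2\right).
\end{equation*}

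The final step is to show that the parenthesis vanishes, and this is the crux: it is where the hypothesis that $(\widetilde{\eta}_+,\widetilde{\eta}_-)$ are the cousins of an honest null curve is needed. Expanding $\widetilde{\eta}_\pm''$ in the basis $(\eta_\pm,\eta_\pm')$, comparing with $\widetilde{\eta}_\pm''=\widetilde{\curvature}_\pm\,\widetilde{\eta}_\pm$ (which holds because $\widetilde{\eta}_\pm$ has unit affine speed), and inserting the expression for $\alpha_\pm'$ found above, one gets that the central affine curvatures $\widetilde{\curvature}_\pm=\widetilde{\bending}\pm1$ of $\widetilde{\eta}_\pm$ are $\widetilde{\curvature}_\pm=2(\alpha_\pm^2-1)/a_\pm^2-\curvature_\pm$. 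By Theorem~\ref{relation} applied to $\widetilde{\gamma}$ one has $(\widetilde{\curvature}_+-\widetilde{\curvature}_-)/2=1$, and combining this with $\curvature_+-\curvature_-=2$ yields exactly $(\alpha_+^2-1)/a_+^2-(\alpha_-^2-1)/a_-^2=2$. Hence the parenthesis above is zero, $a_+\alpha_--a_-\alpha_+$ is a constant $\chi$, and~\eqref{chi} holds. I expect the only genuine difficulty to lie in this last point — the arc-length normalization of $\widetilde{\eta}_\pm$ alone does \emph{not} force the quantity to be constant; one really must use $\widetilde{\curvature}_+-\widetilde{\curvature}_-=2$. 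The rest is routine bookkeeping with $2\times2$ determinants, where the main risk is sign errors.
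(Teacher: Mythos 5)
Your proof is correct, and in substance it is the paper's own argument written in different variables: with $a_\pm=\det(\eta_\pm,\widetilde{\eta}_\pm)$ your pair $(\alpha_\pm,a_\pm)$ corresponds to the paper's $(-f_\pm/c_\pm,\,1/c_\pm)$, your first-order equation for $\alpha_\pm$ is the paper's Riccati equation $f_\pm'+f_\pm^2=\bending\pm1+c_\pm^2$, and your identity $\widetilde{\curvature}_\pm=2\bigl(\alpha_\pm^2-1\bigr)/a_\pm^2-\curvature_\pm$ combined with $\widetilde{\curvature}_+-\widetilde{\curvature}_-=2$ is exactly the paper's step of computing the bending $\widetilde{\bending}$ from both spinor components and equating the two expressions, which yields $f_+^2-f_-^2=2+c_+^2-c_-^2$. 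Where you genuinely diverge is the endgame. The paper splits into three cases according to the sign of $2+c_+^2-c_-^2$, parameterizes $(f_+,f_-)$ by hyperbolic functions in the nondegenerate cases to conclude that each $f_\pm$ is individually constant, and only then evaluates $\chi=(f_+-f_-)/(c_+c_-)$; you instead differentiate the target quantity $a_+\alpha_--a_-\alpha_+$ once and observe that its derivative is killed by that same algebraic constraint. Your route is shorter and avoids the trichotomy altogether, and you are right that the arc-length normalization alone is not enough --- the cousin relation $\widetilde{\curvature}_+-\widetilde{\curvature}_-=2$ is the indispensable input. The trade-off is that the paper's case analysis extracts more than the proposition asserts, namely that when $2+c_+^2-c_-^2\neq0$ the functions $f_\pm$ are \emph{separately} constant; that extra fact is precisely what drives Proposition~\ref{5.5} (the $\chi\neq0$ case) and the reduction to a single Riccati equation in Theorem~\ref{T2}, so your argument proves Proposition~\ref{T1} cleanly but would still need to be supplemented for the results that follow it.
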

\begin{proof} Let $\gamma$ and $\widetilde{\gamma}$ be two null curves that are $\mathcal{T}$-transforms of each other. It then follows from Definition~\ref{5.3} that $\eta_+$ is a $\mathcal{T}$-transform of $\widetilde{\eta}_+$ and $\eta_-$ is a $\mathcal{T}$-transform of $\widetilde{\eta}_-$. Moreover, Defi\-ni\-tion~\ref{5.1} then guarantees the existence of two nonzero constants $c_\pm\in\R$ such that ${\det (\eta_+,\widetilde{\eta}_+)=c_+}$ and $\det (\eta_-,\widetilde{\eta}_-)=c_-$, respectively.
	
	Then there exist smooth functions $f_{\pm}\colon J\subseteq\R\longrightarrow \R$ such that, respectively,
	\begin{gather}
		\widetilde{\eta}_+=\frac{1}{c_+}\bigl(\eta_+'-f_+\eta_+\bigr),\label{T1:f1}\\
		\widetilde{\eta}_-=\frac{1}{c_-}\bigl(\eta_-'-f_-\eta_-\bigr).\label{T1:f12}
	\end{gather}
	From now on, we only work with \eqref{T1:f1} and the sub-index $+$. The argument for \eqref{T1:f12} and the sub-index $-$ is analogous.
	
	Differentiating \eqref{T1:f1} and using the spinorial Frenet-type equations \eqref{dF+} (respectively, \eqref{dF-} when the sub-index is $-$) of $\gamma$, we obtain
	\begin{equation}\label{T1:f2}
		\widetilde{\eta}_+'=\frac{1}{c_+}\bigl(\bigl[\bending+1-f_+'\bigr]\eta_+-f_+\eta_+'\bigr),
	\end{equation}
	where $\bending$ is the bending of $\gamma$. Since $\eta_+$ and $\widetilde{\eta}_+$ are two star-shaped curves parameterized by the central affine arc length, $\det \bigl(\eta_+,\eta_+'\bigr)=1$ and $\det \bigl(\widetilde{\eta}_+,\widetilde{\eta}_+'\bigr)=1$. Thus, it follows from~\eqref{T1:f1} and~\eqref{T1:f2} that
	\begin{align*}
		1=\det \bigl(\widetilde{\eta}_+,\widetilde{\eta}_+'\bigr) =\frac{1}{c_+^2}\det \bigl(\eta_+'-f_+\eta_+,\bigl[\bending+1-f_+'\bigr]\eta_+-f_+\eta_+'\bigr)
		 =\frac{1}{c_+^2}\bigl(f_+'-\bending-1+f_+^2\bigr).
	\end{align*}
	Therefore, the function $f_+$ is a solution of the Riccati equation
	\begin{equation}\label{T1:f3}
		f_+'+f_+^2=\bending+1+c_+^2.
	\end{equation}
	Combining \eqref{T1:f1}, \eqref{T1:f2} and \eqref{T1:f3}, we obtain that the canonical central affine frame field $\widetilde{F}_+$ along~$\widetilde{\eta}_+$ is given in terms of the canonical central affine frame field $F_+$ along $\eta_+$ by
	\begin{equation}\label{T1:f4}
		\widetilde{F}_+=\frac{1}{c_+}F_+\begin{pmatrix} -f_+ & f_+^2-c_+^2 \\ 1 & -f_+\end{pmatrix}.
	\end{equation}
	Since it will be used later, we also specify here the expression for the canonical central affine frame field $\widetilde{F}_-$ along $\widetilde{\eta}_-$ in terms of $F_-$, namely,
	\begin{equation}\label{T1:f4-}
		\widetilde{F}_-=\frac{1}{c_-}F_-\begin{pmatrix} -f_- & f_-^2-c_-^2 \\ 1 & -f_-\end{pmatrix}.
	\end{equation}
	From the above relation \eqref{T1:f4} between $\widetilde{F}_+$ and $F_+$ (respectively, \eqref{T1:f4-}), the spinorial Frenet-type equations \eqref{dF+} (respectively, \eqref{dF-}) and the Riccati equation \eqref{T1:f3} satisfied by the function $f_+$ (respectively, the analogue one for $f_-$), we conclude with
	\begin{align*}
		\widetilde{F}_+^{-1}{\rm d}\widetilde{F}_+&{}=\frac{1}{c_+^2}\begin{pmatrix} -f_+ & c_+^2-f_+^2 \\ -1 & -f_+ \end{pmatrix}\biggl[F_+^{-1}{\rm d}F_+\begin{pmatrix} -f_+ & f_+^2-c_+^2 \\ 1 & -f_+\end{pmatrix}+\begin{pmatrix} -f_+' & (f_+^2)' \\ 0 & -f_+'\end{pmatrix} \biggr]\\
		&{}=\begin{pmatrix} 0 & 2f_+^2-\bending-1-2c_+^2 \\ 1 & 0 \end{pmatrix}.
	\end{align*}
	Hence, from \eqref{dF+} (respectively, \eqref{dF-}), the bending $\widetilde{\bending}$ of $\widetilde{\gamma}$ satisfies
	\begin{equation}\label{T1:f6}
		\widetilde{\bending}=2f_+^2-\bending-2-2c_+^2.
	\end{equation}
	The analogue computations for the sub-index $-$ show that $\widetilde{\bending}$ can also be expressed as
	\begin{equation}\label{T1:f62}
		\widetilde{\bending}=2f_-^2-\bending+2-2c_-^2.
	\end{equation}
	Therefore, setting equal \eqref{T1:f6} and \eqref{T1:f62}, it follows that
	\begin{equation*}
		f_+^2-f_-^2=2+c_+^2-c_-^2.
	\end{equation*}
	We now distinguish between three cases: $2+c_+^2-c_-^2>0$, $2+c_+^2-c_-^2<0$, and $2+c_+^2-c_-^2=0$.
	
	Assume first that $2+c_+^2-c_-^2>0$ holds. Then, there exists a smooth function $\phi$ such that
	\begin{equation*}
		f_+=\pm\sqrt{2+c_+^2-c_-^2}\cosh(\phi),\qquad
		f_-=\sqrt{2+c_+^2-c_-^2}\sinh(\phi).
	\end{equation*}
	This function $\phi$ satisfies two differential equations arising from the Riccati equation \eqref{T1:f3} satisfied by $f_+$ and the analogue one for $f_-$, namely,
	\begin{gather*}
\begin{split}
&\pm\sqrt{2+c_+^2-c_-^2}\sinh(\phi)\phi'+\bigl(2+c_+^2-c_-^2\bigr)\cosh^2(\phi)=\bending+1+c_+^2,\\
&\sqrt{2+c_+^2-c_-^2}\cosh(\phi)\phi'+\bigl(2+c_+^2-c_-^2\bigr)\sinh^2(\phi)=\bending-1+c_-^2.
\end{split}
	\end{gather*}
	Subtracting both equations, we then see that $\phi'=0$ and, hence, $\phi$ is constant. This implies that both $f_+$ and $f_-$ are also constant functions and, in addition, $f_+\neq f_-$.
	
	Finally, we use \eqref{T1:f4} (and \eqref{T1:f4-}) together with the fact that $f_+$ and $f_-$ are constant functions to compute \eqref{chi}, obtaining
	\begin{align*}
		\chi&{}=\det (\eta_+,\widetilde{\eta}_+)\det \bigl(\eta_-,\widetilde{\eta}_-'\bigr)-\det (\eta_-,\widetilde{\eta}_-)\det \bigl(\eta_+,\widetilde{\eta}_+'\bigr)\\
		&{}=\frac{1}{c_+}\biggl(\frac{-f_-}{c_-}\biggr)-\frac{1}{c_-}\biggl(\frac{-f_+}{c_+}\biggr)=\frac{f_+-f_-}{c_+c_-},
	\end{align*}
	which is a nonzero constant.
	
	The case $2+c_+^2-c_-^2<0$ follows the same reasoning, hence, we avoid repeating it here.
	
	Assume finally that $2+c_+^2-c_-^2=0$. Then, $f_+^2=f_-^2$ holds. From the Riccati equations (\eqref{T1:f3} and the analogue one) satisfied by $f_+$ and $f_-$, we compute
	\begin{equation*}f_+'-f_-'=2+c_+^2-c_-^2-f_+^2+f_-^2=0.\end{equation*}
	Thus, integrating this we obtain, $f_+=f_-+\ell$, for some constant $\ell\in\R$. Observe that since $f_+^2=f_-^2$, we have two options: $f_+=f_-$ when $\ell=0$, or $f_+=-f_-=\ell/2$ when $\ell\neq 0$. In both cases, we compute \eqref{chi} as in the previous cases obtaining
	\begin{equation*}\chi=\det (\eta_+,\widetilde{\eta}_+)\det \bigl(\eta_-,\widetilde{\eta}_-'\bigr)-\det (\eta_-,\widetilde{\eta}_-)\det \bigl(\eta_+,\widetilde{\eta}_+'\bigr)=\frac{f_+-f_-}{c_+c_-}=\frac{\ell}{c_+c_-},\end{equation*}
	which is a constant. This concludes the proof.
\end{proof}

Equation \eqref{chi} gives two essentially different possibilities for the $\mathcal{T}$-transform on null curves, which depend on whether $\chi=0$ or $\chi\neq 0$.

\subsection[Case chi neq 0]{Case $\boldsymbol{\chi\neq 0}$}

We will show that the most interesting case is $\chi=0$, since for the case $\chi\neq 0$ the bending of the null curves is constant.

\begin{Proposition}\label{5.5} Let $\gamma\colon J \subseteq \R\longrightarrow \AdS$ be a null curve with bending $\bending$ and $(F_+,F_-)$ be the spinor frame field along $\gamma$. If $\gamma$ has $\mathcal{T}$-transforms $\widetilde{\gamma}$ satisfying \eqref{chi} with $\chi\neq 0$, then the bending~$\bending$ of $\gamma$ is constant and the $\mathcal{T}$-transforms $\widetilde{\gamma}$ are given by
\begin{equation*}\widetilde{\gamma}(s)=\frac{1}{c_+c_-}F_+\begin{pmatrix}-f_+&\bending+1\\1&-f_+\end{pmatrix}\begin{pmatrix}-f_-&1-\bending\\-1&-f_-\end{pmatrix}F_-^{-1},\end{equation*}
	where $c_+,c_-\in\R$ and $f_+\neq f_-$ are two constants such that
$f_+^2=\bending+1+c_+^2$, $f_-^2=\bending-1+c_-^2$.
Moreover, the $\mathcal{T}$-transforms $\widetilde{\gamma}$ of $\gamma$ have constant bending $\widetilde{\bending}=\bending$.
\end{Proposition}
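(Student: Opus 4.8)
The plan is to build on the machinery already developed in the proof of Proposition~\ref{T1}, where the key structural facts were established. First I would recall that, under the hypothesis $\chi\neq 0$, the analysis in Proposition~\ref{T1} forced the transforming functions $f_+$ and $f_-$ to be \emph{constant} (this happened in every one of the three cases $2+c_+^2-c_-^2\gtrless 0$ or $=0$, since one always deduces $\phi'=0$ or $f_+'-f_-'=0$ together with $f_+^2=f_-^2+\text{const}$). Once $f_\pm$ are constant, the Riccati equations \eqref{T1:f3} and its $-$ analogue read $f_+^2=\bending+1+c_+^2$ and $f_-^2=\bending-1+c_-^2$; since the right-hand sides must be constant and $c_\pm$ are fixed constants, this immediately yields that $\bending$ is constant. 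I would state this as the first step.

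Next I would assemble the explicit formula for $\widetilde{\gamma}$. By Theorem~\ref{relation}, $\widetilde{\gamma}=\widetilde{F}_+\widetilde{F}_-^{-1}$, so I substitute the expressions \eqref{T1:f4} and \eqref{T1:f4-} for $\widetilde{F}_\pm$ in terms of $F_\pm$, using $f_\pm$ constant and $c_\pm^2=f_+^2-\bending-1$ respectively $c_-^2=f_-^2-\bending+1$. Computing $\widetilde{F}_-^{-1}$ explicitly as
\begin{equation*}
\widetilde{F}_-^{-1}=c_-\begin{pmatrix}-f_-& c_-^2-f_-^2\\ -1 & -f_-\end{pmatrix}^{-1}F_-^{-1}
=\frac{c_-}{c_-^2}\begin{pmatrix}-f_- & f_-^2-c_-^2\\ 1 & -f_-\end{pmatrix}F_-^{-1},
\end{equation*}
and carefully tracking the scalar factors $1/c_+$ and $c_-/c_-^2$, one should recognize that the middle matrices collapse (after using $f_+^2-c_+^2=\bending+1$ and $c_-^2-f_-^2=1-\bending$) into exactly the two $2\times 2$ matrices displayed in the statement, with overall constant $1/(c_+c_-)$. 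This is essentially bookkeeping, so I would present the substitution and the identifications $f_+^2-c_+^2=\bending+1$, $c_-^2-f_-^2=1-\bending$ and let the matrix product be ``a direct computation.''

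For the last assertion, $\widetilde{\bending}=\bending$, I would simply invoke equation \eqref{T1:f6} from the proof of Proposition~\ref{T1}: $\widetilde{\bending}=2f_+^2-\bending-2-2c_+^2$. Substituting $f_+^2-c_+^2=\bending+1$ gives $\widetilde{\bending}=2(\bending+1)-\bending-2=\bending$. (Alternatively one checks consistency with \eqref{T1:f62}, which gives the same answer via $f_-^2-c_-^2=\bending-1$.) So the constancy of $\widetilde{\bending}$ and the equality $\widetilde{\bending}=\bending$ are immediate corollaries of formulas already proved.

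I do not anticipate a genuine obstacle here: the conceptual work was done in Proposition~\ref{T1}, and this proposition is a packaging of its consequences in the case $\chi\neq 0$. The only point requiring care is the matrix algebra in the second step — in particular getting the scalar prefactor to come out as $1/(c_+c_-)$ rather than something like $c_-/(c_+c_-^2)$, which forces one to use $c_-^2=f_-^2-\bending+1$ at the right moment; and making sure the sign conventions in inverting $\widetilde F_-$ match the matrix written in the statement. I would double-check this by verifying $\det\widetilde\gamma=1$ and that differentiating the resulting formula reproduces the spinorial Frenet equations \eqref{dF+}--\eqref{dF-} with $\widetilde\bending=\bending$, which is a useful consistency test but not logically necessary.
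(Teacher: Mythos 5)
Your proposal follows the paper's proof exactly: the paper likewise extracts the constancy of $f_\pm$ (hence of $\bending$) from the proof of Proposition~\ref{T1} together with the Riccati equation \eqref{T1:f3} and its $-$ analogue, reads off $\widetilde{\bending}=\bending$ from \eqref{T1:f6}, and obtains the displayed formula by substituting \eqref{T1:f4} and \eqref{T1:f4-} into $\widetilde{\gamma}=\widetilde{F}_+\widetilde{F}_-^{-1}$, a step it leaves as ``a straightforward computation.'' The only blemish is a sign slip in your intermediate display for $\widetilde{F}_-^{-1}$, which should be $\frac{1}{c_-}\bigl(\begin{smallmatrix}-f_- & c_-^2-f_-^2\\ -1 & -f_-\end{smallmatrix}\bigr)F_-^{-1}$ (the adjugate, not the original matrix); since you then apply the correct identification $c_-^2-f_-^2=1-\bending$ and land on the matrix appearing in the statement, this is a typo rather than a gap.
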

\begin{proof} From the proof of Proposition~\ref{T1}, it follows that if $\chi\neq 0$, then both $f_{\pm}$ are constant functions. Hence, from the Riccati equation \eqref{T1:f3} (and the analogue one for $f_-$), we have
$f_+^2=\bending+ 1+c_+^2$, $f_-^2=\bending-1+c_-^2$.
	This proves that the bending $\bending$ of $\gamma$ must be constant. Moreover, using this in the relation between $\bending$ and the bending $\widetilde{\bending}$ of $\widetilde{\gamma}$ given in \eqref{T1:f6}, we conclude that $\widetilde{\bending}=\bending$.
	
	Furthermore, since according to Theorem~\ref{relation} $\widetilde{\gamma}=\widetilde{F}_+\widetilde{F}_-^{-1}$,
	from the expression of the central affine frame field $\widetilde{F}_+$ given in \eqref{T1:f4} and the analogue one for~$\widetilde{F}_-$ given in \eqref{T1:f4-} it is a straightforward computation to obtain the explicit expression of the $\mathcal{T}$-transforms satisfying \eqref{chi} with $\chi\neq 0$.
\end{proof}

\begin{Remark} From Proposition~\ref{5.5} if $\gamma$ and $\widetilde{\gamma}$ are $\mathcal{T}$-transforms of each other for $\chi\neq 0$, then they have the same constant bending. However, the converse is not true. Null curves with constant bending have $\mathcal{T}$-transforms for $\chi=0$ and, in particular, these $\mathcal{T}$-transforms may not have constant bending (see, for instance, Figure~\ref{transform} and/or Example~\ref{5.14}).
\end{Remark}

\subsection[Case chi = 0]{Case $\boldsymbol{\chi=0}$}

From now on, we focus on $\mathcal{T}$-transforms satisfying \eqref{chi} for $\chi=0$. The next result shows how to construct $\mathcal{T}$-transforms for $\chi=0$ of a null curve $\gamma$ beginning with solutions of a Riccati equation.

\begin{Theorem}\label{T2} Let $\gamma\colon J\subseteq\R\longrightarrow\AdS$ be a null curve with bending $\bending$ and $(F_+,F_-)$ be the spinor frame field along it. A null curve $\widetilde{\gamma}\colon J\subseteq\R\longrightarrow \AdS$ is a $\mathcal{T}$-transform of $\gamma$ for $\chi=0$ if and only if
	\begin{equation}\label{gammatilda}
		\widetilde{\gamma}=\pm F_+\begin{pmatrix} \tanh(\xi) & -\csch(\xi)\sech(\xi)f \\ 0 & \coth(\xi) \end{pmatrix} F_-^{-1},
	\end{equation}
	where $\xi\neq 0$ is a constant and $f\colon J\subseteq\R\longrightarrow\R$ is a solution of the Riccati equation
	\begin{equation}\label{Riccati}
		f'+f^2=\bending+\cosh(2\xi).
	\end{equation}
	Moreover, the bending of $\widetilde{\gamma}$ is
	\begin{equation}\label{T2:f3}
		\widetilde{\bending}=-\bending+2f^2-2\cosh(2\xi).
	\end{equation}
\end{Theorem}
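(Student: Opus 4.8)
The plan is to derive both implications from Proposition~\ref{T1} by a careful change of constants, followed by two short $2\times2$ matrix computations.

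For the \emph{necessity} direction, I would start from a $\mathcal T$-transform $\widetilde\gamma$ of $\gamma$ with $\chi=0$ and revisit the trichotomy in the proof of Proposition~\ref{T1}. There $\chi$ always equals $(f_+-f_-)/(c_+c_-)$, and it can vanish only in the branch $2+c_+^2-c_-^2=0$ together with $f_+=f_-$; the other two branches force $f_+\neq f_-$. So $\chi=0$ yields $c_-^2=c_+^2+2$ and a single transforming function $f:=f_+=f_-$, which by~\eqref{T1:f3} satisfies $f'+f^2=\bending+1+c_+^2$. Since $c_+\neq 0$ we have $1+c_+^2>1$, so I can write $1+c_+^2=\cosh(2\xi)$ for some $\xi\neq 0$; then $c_+^2=2\sinh^2\xi$, $c_-^2=2\cosh^2\xi$, and the Riccati equation becomes exactly~\eqref{Riccati}. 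By Theorem~\ref{relation}, $\widetilde\gamma=\widetilde F_+\widetilde F_-^{-1}$ with $\widetilde F_\pm$ as in~\eqref{T1:f4} and~\eqref{T1:f4-} (now with $f_\pm=f$). Using that $\det\begin{pmatrix}-f&f^2-c_\pm^2\\1&-f\end{pmatrix}=c_\pm^2$ and that $c_+^2-c_-^2=-2$, a direct multiplication collapses the product of the two inner matrices to $\begin{pmatrix}c_+^2&-2f\\0&c_-^2\end{pmatrix}$ up to the scalar $1/(c_+c_-)$, so $\widetilde\gamma=F_+\begin{pmatrix}c_+/c_-&-2f/(c_+c_-)\\0&c_-/c_+\end{pmatrix}F_-^{-1}$. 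Substituting the values of $c_\pm$ turns this into~\eqref{gammatilda}, with $c_+/c_-=\tanh\xi$, $c_-/c_+=\coth\xi$, $-2f/(c_+c_-)=-\csch(\xi)\sech(\xi)f$; the overall $\pm$ absorbs the residual sign freedom in $c_\pm$. Finally~\eqref{T2:f3} drops out of~\eqref{T1:f6}, since $2f^2-\bending-2-2c_+^2=-\bending+2f^2-2\cosh(2\xi)$.

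For \emph{sufficiency}, given $\xi\neq 0$ and a solution $f$ of~\eqref{Riccati}, I would set $c_+=\sqrt2\,\sinh\xi$, $c_-=\sqrt2\,\cosh\xi$ (both nonzero) and observe that $f$ then solves simultaneously $f'+f^2=\bending+1+c_+^2$ and $f'+f^2=\bending-1+c_-^2$. Define $\widetilde F_\pm$ by~\eqref{T1:f4} and~\eqref{T1:f4-}; these lie in $\SL$, and the computation of $\widetilde F_\pm^{-1}\d\widetilde F_\pm$ already carried out in the proof of Proposition~\ref{T1} shows they satisfy~\eqref{dF+} and~\eqref{dF-} with a \emph{common} bending $\widetilde\bending=-\bending+2f^2-2\cosh(2\xi)$ (the two candidate values $2f^2-\bending-2-2c_+^2$ and $2f^2-\bending+2-2c_-^2$ agreeing because $c_-^2-c_+^2=2$). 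Their first columns $\widetilde\eta_\pm=c_\pm^{-1}\bigl(\eta_\pm'-f\eta_\pm\bigr)$ are star-shaped with central affine curvatures $\widetilde\bending\pm1$, whose half-difference is $1$, so Theorem~\ref{relation} gives that $\widetilde\gamma:=\widetilde F_+\widetilde F_-^{-1}$ is a null curve with bending $\widetilde\bending$, spinor frame $(\widetilde F_+,\widetilde F_-)$, and cousins $(\widetilde\eta_+,\widetilde\eta_-)$. Since $\det\bigl(\eta_\pm,\widetilde\eta_\pm\bigr)$ is a nonzero constant, $\widetilde\gamma$ is a $\mathcal T$-transform of $\gamma$ by Definition~\ref{5.3}; and plugging $\widetilde\eta_\pm$ and $\widetilde\eta_\pm'=c_\pm^{-1}\bigl((f^2-c_\pm^2)\eta_\pm-f\eta_\pm'\bigr)$ (from~\eqref{T1:f2} and~\eqref{Riccati}) into~\eqref{chi} gives $\chi=0$. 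The matrix computation above identifies this $\widetilde\gamma$ with~\eqref{gammatilda}, and~\eqref{T2:f3} is the formula for $\widetilde\bending$ just obtained.

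The argument is essentially a bookkeeping exercise on top of Proposition~\ref{T1}, with no conceptual obstacle. The step requiring the most care is the change of constants: one must check that $\chi=0$ genuinely forces both $c_-^2-c_+^2=2$ and $f_+=f_-$ — so that a single Riccati equation~\eqref{Riccati} governs the transform — and then track the sign ambiguities in $c_\pm$ closely enough to see that they produce exactly the overall $\pm$ in~\eqref{gammatilda} and nothing else.
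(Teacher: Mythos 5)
Your proposal is correct and follows essentially the same route as the paper: both directions are reduced to the trichotomy in Proposition~\ref{T1} (with $\chi=0$ forcing $f_+=f_-$ and $c_-^2-c_+^2=2$), the constants are reparameterized as $c_+=\sqrt2\sinh\xi$, $c_-=\pm\sqrt2\cosh\xi$, and the frames \eqref{T1:f4}--\eqref{T1:f4-} (the paper's \eqref{tildeF+}--\eqref{tildeF-}) are multiplied out to yield \eqref{gammatilda}, with \eqref{T2:f3} coming from \eqref{T1:f6}. Your explicit check that the product of the inner matrices collapses to $\begin{pmatrix}c_+^2&-2f\\0&c_-^2\end{pmatrix}$ and your remark that $\chi=(f_+-f_-)/(c_+c_-)=0$ in the converse are exactly the computations the paper performs or leaves implicit.
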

\begin{proof} Let $\gamma$ be a null curve with bending $\bending$ and assume that $\widetilde{\gamma}$ is a $\mathcal{T}$-transform of $\gamma$ for $\chi=0$.
	
	The proof of Proposition~\ref{T1} shows that $f_+=f_-$ and $2+c_+^2-c_-^2=0$. From the latter, there must exist a nonzero constant $\xi$ such that $c_+=\sqrt{2}\sinh(\xi)$ and $c_-=\pm\sqrt{2}\cosh(\xi)$. Moreover, the function $f_+=f_-$ is a solution of the Riccati equation \eqref{T1:f3}. For simplicity, we simply put $f=f_\pm$. The Riccati equation \eqref{T1:f3} now reads
	\begin{equation*}f'+f^2=\bending+1+2\sinh^2(\xi)=\bending+\cosh(2\xi),\end{equation*}
	which coincides with \eqref{Riccati} in the statement. Further, from the expression of the central affine frame field $\widetilde{F}_+$ given in \eqref{T1:f4}, the analogue one for $\widetilde{F}_-$ given in \eqref{T1:f4-}, $f=f_\pm$, and the above values of $c_+$ and $c_-$, we have
	\begin{align*}
		\widetilde{\gamma}=\widetilde{F}_+\widetilde{F}_-^{-1}&{}=\frac{1}{c_+c_-}F_+\begin{pmatrix} -f & f^2-c_+^2 \\ 1 & -f \end{pmatrix}\begin{pmatrix} -f & c_-^2-f^2 \\ -1 & -f\end{pmatrix}F_-^{-1}\\
		&{}=\pm F_+\begin{pmatrix} \tanh(\xi) & -\csch(\xi)\sech(\xi)f \\ 0 & \coth(\xi)\end{pmatrix}F_-^{-1},
	\end{align*}
	proving \eqref{gammatilda}. Finally, we deduce from \eqref{T1:f6} that
	\begin{equation*}\widetilde{\bending}=2f^2-\bending-2-4\sinh^2(\xi)=-\bending+2f^2-2\cosh(2\xi).\end{equation*}
	This concludes the forward implication.
	
	Conversely, suppose that $f\colon J\subseteq\R\longrightarrow\R$ is a solution of the Riccati equation \eqref{Riccati} and consider the $\SL\times\SL$-valued map defined by \smash{$\bigl(\widetilde{F}_+,\widetilde{F}_-\bigr)$}, where
	\begin{gather}
		\widetilde{F}_+=\frac{1}{\sqrt{2} \sinh(\xi)}\,F_+\begin{pmatrix} -f & f^2-2\sinh^2(\xi) \\ 1 & -f\end{pmatrix},\label{tildeF+}\\
		\widetilde{F}_-=\frac{\pm 1}{\sqrt{2} \cosh(\xi)}\,F_-\begin{pmatrix} -f & f^2-2\cosh^2(\xi) \\ 1 & -f \end{pmatrix}.\label{tildeF-}
	\end{gather}
	Recall that $(F_+,F_-)$ is the spinor frame field along the null curve $\gamma$.
	
	Then, the map $\bigl(\widetilde{F}_+,\widetilde{F}_-\bigr)$ is a lift of a null curve $\widetilde{\gamma}$ to $\SL\times\SL$. We will first show that \smash{$\bigl(\widetilde{F}_+,\widetilde{F}_-\bigr)$} is indeed the spinor frame field along $\widetilde{\gamma}$.
	
	From the spinorial Frenet-type equations \eqref{dF+} and \eqref{dF-} of $\gamma$ and the definition of $\widetilde{F}_\pm$ given in \eqref{tildeF+} and \eqref{tildeF-}, it follows that
	\begin{gather*}
		\widetilde{F}_+^{-1}{\rm d}\widetilde{F}_+=\begin{pmatrix} 0 & \widetilde{\bending}+1 \\ 1 & 0 \end{pmatrix},\qquad
		\widetilde{F}_-^{-1}{\rm d}\widetilde{F}_-=\begin{pmatrix} 0 & \widetilde{\bending}-1 \\ 1 & 0 \end{pmatrix},
	\end{gather*}
	where $\widetilde{\bending}$ is as in \eqref{T2:f3}. Consequently, \smash{$\bigl(\widetilde{F}_+,\widetilde{F}_-\bigr)$} is the spinor frame field along $\widetilde{\gamma}$ and the function~$\widetilde{\bending}$ is the bending of $\widetilde{\gamma}$.
	
	We now prove that $\widetilde{\gamma}$ is a $\mathcal{T}$-transform of $\gamma$. Since $\bigl(\widetilde{F}_+,\widetilde{F}_-\bigr)$ is the spinor frame field along~$\widetilde{\gamma}$, the first column vectors of \smash{$\widetilde{F}_+$} and \smash{$\widetilde{F}_-$} are the pair $(\widetilde{\eta}_+,\widetilde{\eta}_-)$ of star-shaped cousins associated with $\widetilde{\gamma}$. From the expressions of $\widetilde{F}_+$ and $\widetilde{F}_-$ given in \eqref{tildeF+} and \eqref{tildeF-}, respectively, we have
	\begin{gather*}
		\widetilde{\eta}_+=\frac{1}{\sqrt{2} \sinh(\xi)}\bigl(-f\eta_++\eta_+'\bigr),\qquad
		\widetilde{\eta}_-=\frac{\pm 1}{\sqrt{2} \cosh(\xi)}\bigl(-f\eta_-+\eta_-'\bigr).
	\end{gather*}
	A simple computation involving $\det \bigl(\eta_+,\eta_+'\bigr)=1$ and $\det \bigl(\eta_-,\eta_-'\bigr)=1$, then shows that
	\begin{equation*}
		\det (\eta_+,\widetilde{\eta}_+)=\frac{1}{\sqrt{2} \sinh(\xi)},\qquad \det (\eta_-,\widetilde{\eta}_-)=\frac{\pm 1}{\sqrt{2} \cosh(\xi)}.
	\end{equation*}
	From Definition~\ref{5.1}, we conclude that $\eta_+$ is a $\mathcal{T}$-transform of $\widetilde{\eta}_+$ and that $\eta_-$ is a $\mathcal{T}$-transform of $\widetilde{\eta}_-$. Finally, according to Definition~\ref{5.3}, $\widetilde{\gamma}$ is a $\mathcal{T}$-transform of $\gamma$.
\end{proof}

\begin{Definition} The null curve $\widetilde{\gamma}$ given by \eqref{gammatilda} is called the \emph{$\mathcal{T}$-transform $($for $\chi=0)$ of $\gamma$ with parameter $\xi$ and transforming function $f$}. We denote it by $\mathcal{T}_{\xi,f}(\gamma)$.
\end{Definition}

Figure~\ref{transform} shows an example of a $\mathcal{T}$-transform (for $\chi=0$) $\widetilde{\gamma}=\mathcal{T}_{\xi,f}(\gamma)$ of a null curve $\gamma$ computed as in \eqref{gammatilda} of Theorem~\ref{T2}. The initial null curve $\gamma$ is the closed null curve with constant bending $\bending_{7,3}$ illustrated in Figure~\ref{constant}. The parameter $\xi\neq 0$ is a fixed real number and the transforming function is obtained by solving the Riccati equation \eqref{Riccati} for a fixed initial condition.

In the next result, we prove, using standard arguments, a permutability theorem for the $\mathcal{T}$-transform on null curves in $\AdS$.

\begin{Theorem}\label{T3} Let $\gamma\colon J\subseteq\R\longrightarrow\AdS$ be a null curve with bending $\bending$ and consider two $\mathcal{T}$-transforms of $\gamma$, namely, $\gamma_1=\mathcal{T}_{\xi_1,f_1}(\gamma)$ and $\gamma_2=\mathcal{T}_{\xi_2,f_2}(\gamma)$ with parameters $\xi_1\neq\xi_2$ and transforming functions $f_1\neq f_2$, respectively. Then, the functions
	\begin{gather*}
		f_{2\to 1}=-f_1+\frac{\cosh(2\xi_1)-\cosh(2\xi_2)}{f_1-f_2},\qquad
		f_{1\to 2}=-f_2+\frac{\cosh(2\xi_1)-\cosh(2\xi_2)}{f_1-f_2},
	\end{gather*}
	satisfy, respectively, the Riccati equations
	\begin{equation*}
		f_{2\to 1}'+f_{2\to 1}^2=\bending_1+\cosh(2\xi_2),\qquad
		f_{1\to 2}'+f_{1\to 2}^2=\bending_2+\cosh(2\xi_1),
	\end{equation*}
	where $\bending_1$ and $\bending_2$ are the bendings of $\gamma_1$ and $\gamma_2$, respectively. Moreover,
	\begin{equation*}
		\mathcal{T}_{\xi_2,f_{2\to 1}}(\mathcal{T}_{\xi_1,f_1}(\gamma))=\mathcal{T}_{\xi_1,f_{1\to 2}}(\mathcal{T}_{\xi_2,f_2}(\gamma)),
	\end{equation*}
	and the bending of $\mathcal{T}_{\xi_2,f_{2\to 1}}(\mathcal{T}_{\xi_1,f_1}(\gamma))$ is the function defined by
	\begin{equation*}
		\bending_{2\to 1}=\bending-2(\cosh(2\xi_1)-\cosh(2\xi_2))\frac{f_1+f_2}{f_1-f_2}+2\biggl(\frac{\cosh(2\xi_1)-\cosh(2\xi_2)}{f_1-f_2}\biggr)^2.
	\end{equation*}
	$($Of course, the bending $\bending_{1\to 2}$ of $\mathcal{T}_{\xi_1,f_{1\to 2}}(\mathcal{T}_{\xi_2,f_2}(\gamma))$ is, precisely, $\bending_{1\to 2}=\bending_{2\to 1}$.$)$
\end{Theorem}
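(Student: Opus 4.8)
The plan is to verify the permutability theorem by direct computation, exploiting the explicit matrix formula \eqref{gammatilda} from Theorem~\ref{T2} and the Riccati equation \eqref{Riccati}. First I would check that the functions $f_{2\to 1}$ and $f_{1\to 2}$ solve the stated Riccati equations. For $f_{2\to 1}$, writing $g=(\cosh(2\xi_1)-\cosh(2\xi_2))/(f_1-f_2)$ so that $f_{2\to 1}=-f_1+g$, I would differentiate using $f_1'=\bending+\cosh(2\xi_1)-f_1^2$ and $f_2'=\bending+\cosh(2\xi_2)-f_2^2$ (both instances of \eqref{Riccati}), which give $(f_1-f_2)'=\cosh(2\xi_1)-\cosh(2\xi_2)-(f_1^2-f_2^2)=(f_1-f_2)(g-(f_1+f_2))$, hence $g'=-g(g-f_1-f_2)$. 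Then $f_{2\to 1}'+f_{2\to 1}^2=-f_1'+g'+f_1^2-2f_1g+g^2$; substituting $-f_1'+f_1^2=-\bending-\cosh(2\xi_1)+2f_1^2$ and $g'=-g^2+g(f_1+f_2)$ and using the formula \eqref{T2:f3} for $\bending_1=-\bending+2f_1^2-2\cosh(2\xi_1)$, the terms should collapse to $\bending_1+\cosh(2\xi_2)$. The computation for $f_{1\to 2}$ is symmetric under swapping the indices $1\leftrightarrow 2$.

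Next I would establish the commutativity $\mathcal{T}_{\xi_2,f_{2\to 1}}(\mathcal{T}_{\xi_1,f_1}(\gamma))=\mathcal{T}_{\xi_1,f_{1\to 2}}(\mathcal{T}_{\xi_2,f_2}(\gamma))$. The strategy is purely algebraic: by Theorem~\ref{T2}, applying $\mathcal{T}_{\xi_1,f_1}$ replaces the spinor frame field $(F_+,F_-)$ of $\gamma$ by $(\widetilde F_+,\widetilde F_-)$ as in \eqref{tildeF+}, \eqref{tildeF-} (with $f=f_1$, $\xi=\xi_1$), and then applying $\mathcal{T}_{\xi_2,f_{2\to 1}}$ right-multiplies each factor by the corresponding constant-shaped $2\times 2$ matrix built from $f_{2\to 1}$ and $\xi_2$. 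So the left-hand side has $+$-spinor frame
\begin{equation*}
\frac{1}{\sqrt2\sinh\xi_1}\frac{1}{\sqrt2\sinh\xi_2}\,F_+\begin{pmatrix}-f_1 & f_1^2-2\sinh^2\xi_1\\ 1 & -f_1\end{pmatrix}\begin{pmatrix}-f_{2\to1} & f_{2\to1}^2-2\sinh^2\xi_2\\ 1 & -f_{2\to1}\end{pmatrix},
\end{equation*}
and similarly for the $-$-component with $\sinh\to\cosh$ and $f_{2\to1}\to f_{1\to2}$; the right-hand side is the same with $1\leftrightarrow 2$. Since $\widetilde\gamma=\widetilde F_+\widetilde F_-^{-1}$ and the scalar prefactors match, it suffices to show the two resulting $2\times2$ matrix products (for the $+$-side, and separately for the $-$-side) agree up to an overall sign, i.e.\ that
\begin{equation*}
\begin{pmatrix}-f_1 & f_1^2-2\sinh^2\xi_1\\ 1 & -f_1\end{pmatrix}\begin{pmatrix}-f_{2\to1} & f_{2\to1}^2-2\sinh^2\xi_2\\ 1 & -f_{2\to1}\end{pmatrix}=\pm\begin{pmatrix}-f_2 & f_2^2-2\sinh^2\xi_2\\ 1 & -f_2\end{pmatrix}\begin{pmatrix}-f_{1\to2} & f_{1\to2}^2-2\sinh^2\xi_1\\ 1 & -f_{1\to2}\end{pmatrix}.
\end{equation*}
Expanding both sides and substituting $f_{2\to1}=-f_1+g$, $f_{1\to2}=-f_2+g$ with $g(f_1-f_2)=\cosh(2\xi_1)-\cosh(2\xi_2)=2(\sinh^2\xi_1-\sinh^2\xi_2)$, each of the four matrix entries reduces to a symmetric expression in $f_1,f_2,\sinh^2\xi_1,\sinh^2\xi_2$; e.g.\ the $(2,1)$-entries are both $-f_1-f_{2\to1}=-g=-f_2-f_{1\to2}$, and the other entries should match after using the key relation for $g$. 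The $-$-side is identical with $\sinh^2\xi_i$ replaced by $\cosh^2\xi_i$, and since $\cosh^2\xi_1-\cosh^2\xi_2=\sinh^2\xi_1-\sinh^2\xi_2$, the same identity for $g$ applies verbatim. Finally, having identified the common curve, its bending $\bending_{2\to1}$ is read off by applying \eqref{T2:f3} twice: $\bending_{2\to1}=-\bending_1+2f_{2\to1}^2-2\cosh(2\xi_2)$ with $\bending_1=-\bending+2f_1^2-2\cosh(2\xi_1)$ and $f_{2\to1}=-f_1+g$, which after expanding $f_{2\to1}^2=f_1^2-2f_1g+g^2$ and collecting terms yields the stated formula $\bending-2(\cosh(2\xi_1)-\cosh(2\xi_2))\frac{f_1+f_2}{f_1-f_2}+2g^2$, using $2f_1g=(\cosh(2\xi_1)-\cosh(2\xi_2))\frac{2f_1}{f_1-f_2}$ and symmetrizing via the $1\leftrightarrow2$ computation to confirm $\bending_{1\to2}=\bending_{2\to1}$.

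I expect the main obstacle to be the bookkeeping in the $2\times2$ matrix product identity: it is four polynomial identities in the variables $f_1,f_2$ and the parameters, and the sign ambiguity inherited from the spinor frame being defined only up to sign must be tracked carefully so that the $+$ and $-$ components receive consistent signs (ensuring $\widetilde F_+\widetilde F_-^{-1}$ is genuinely well-defined as a map into $\AdS$). A clean way to organize this is to factor each matrix $\begin{pmatrix}-f & f^2-2c^2\\ 1 & -f\end{pmatrix}$ as $\begin{pmatrix}-f & -c\\ 1 & 0\end{pmatrix}\begin{pmatrix}1 & c\\ 0 & 1\end{pmatrix}$ up to rescaling — or better, to note it equals $A(f)\,\mathrm{diag}(\text{stuff})\,A(f)^{-1}$-type normal forms — but in practice the most transparent route is simply to expand, substitute the definition of $g$, and observe the manifest $1\leftrightarrow2$ symmetry of every entry. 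Everything else (the Riccati verification and the bending formula) is a short substitution once the permutability of the matrix factors is in hand.
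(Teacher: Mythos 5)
Your proposal is correct and follows essentially the same route as the paper: verify the Riccati equations for $f_{2\to 1}$, $f_{1\to 2}$ by direct substitution, reduce the permutability claim to the equality of the products of the $2\times 2$ transformation matrices acting on the spinor frame components (the paper's $G_\pm(\xi,\phi)$), and obtain $\bending_{2\to 1}$ by iterating \eqref{T2:f3}. You in fact supply more detail than the paper does (the explicit derivation $g'=-g(g-f_1-f_2)$ and the entry-by-entry check, where one can note that the products actually agree exactly, not merely up to sign, since the $(1,1)$, $(2,1)$, $(2,2)$ entries and the determinants coincide).
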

\begin{proof} Let $\gamma$ be a null curve with bending $\bending$ and assume that $\gamma_1=\mathcal{T}_{\xi_1,f_1}(\gamma)$ and $\gamma_2=\mathcal{T}_{\xi_2,f_2}(\gamma)$ are two $\mathcal{T}$-transforms of $\gamma$. From Theorem~\ref{T2}, it follows that
	\begin{equation*}
		\bending_1=-\bending+2f_1^2-2\cosh(2\xi_1),\qquad
		\bending_2=-\bending+2f_2^2-2\cosh(2\xi_2)
	\end{equation*}
	are the bendings of $\gamma_1$ and $\gamma_2$, respectively. In addition, the transforming functions $f_1$ and $f_2$ are solutions of the Riccati equations
	\begin{equation*}
		f_1'+f_1^2=\bending+\cosh(2\xi_1),\qquad
		f_2'+f_2^2=\bending+\cosh(2\xi_2),
	\end{equation*}	
	respectively.
	
	Then, an elementary computation involving above differential equations shows that the functions $f_{2\to1}$ and $f_{1\to 2}$ of the statement satisfy the desired Riccati equations. Therefore, $f_{2\to 1}$ is a~transforming function of $\gamma_1$ with parameter $\xi_2$, while $f_{1\to 2}$ is a transforming function of $\gamma_2$ with parameter $\xi_1$.
	
	For simplicity, define the $\SL$-valued maps
	\begin{gather*}
		G_+(\xi,\phi)=\frac{1}{\sqrt{2}\sinh(\xi)}\begin{pmatrix} -\phi & \phi^2-2\sinh^2(\xi) \\ 1 & -\phi \end{pmatrix},\\
		G_-(\xi,\phi)=\frac{\pm1}{\sqrt{2}\cosh(\xi)}\begin{pmatrix} -\phi & \phi^2-2\cosh^2(\xi) \\ 1 & -\phi \end{pmatrix}.
	\end{gather*}
	As shown in the proof of Theorem~\ref{T2}, the maps $(F_+G_+(\xi,\phi),F_-G_-(\xi,\phi))$, for suitable values $\xi\neq 0$ and solutions $\phi$ of the corresponding Riccati equations, are the spinor frame fields along $\mathcal{T}$-transforms of null curves with spinor frame field $(F_+,F_-)$. In particular, we have that $(F_+G_+(\xi_1,f_1),F_-G_-(\xi_1,f_1))$ is the spinor frame field along $\gamma_1$ and $(F_+G_+(\xi_2,f_2),F_-G_-(\xi_2,f_2))$ is the spinor frame field along $\gamma_2$. Observe that, recursively, it follows that the map
	\begin{equation*}([F_+G_+(\xi_1,f_1)]G_+(\xi_2,f_{2\to 1}),[F_-G_-(\xi_1,f_1)]G_-(\xi_2,f_{2\to 1}))\end{equation*}
	is the spinor frame field along $\mathcal{T}_{\xi_2,f_{2\to 1}}(\gamma_1)$ since $(F_+G_+(\xi_1,f_1),F_-G_-(\xi_1,f_1))$ is the spinor frame field along $\gamma_1$. Similarly,
	\begin{equation*}([F_+G_+(\xi_2,f_2)]G_+(\xi_1,f_{1\to 2}),[F_-G_-(\xi_2,f_2)]G_-(\xi_1,f_{1\to 2}))\end{equation*}
	is the spinor frame field along $\mathcal{T}_{\xi_1,f_{1\to 2}}(\gamma_2)$.
	
	It is now a computational matter to check that
	\begin{gather*}
		G_+(\xi_1,f_1)G_+(\xi_2,f_{2\to 1})=G_+(\xi_2,f_2)G_+(\xi_1,f_{1\to 2}),\\
		G_-(\xi_1,f_1)G_-(\xi_2,f_{2\to 1})=G_-(\xi_2,f_2)G_-(\xi_1,f_{1\to 2}),
	\end{gather*}
	which implies that $\mathcal{T}_{\xi_2,f_{2\to 1}}(\gamma_1)=\mathcal{T}_{\xi_1,f_{1\to 2}}(\gamma_2)$ holds.
	
	It remains to prove the expression of the bending $\bending_{2\to 1}$ of $\mathcal{T}_{\xi_2,f_{2\to 1}}(\gamma_1)$. Since $\mathcal{T}_{\xi_2,f_{2\to 1}}(\gamma_1)$ is a~$\mathcal{T}$-transform of $\gamma_1$ with parameter $\xi_2$ and transforming function $f_{2\to 1}$, it follows from Theorem~\ref{T2} that
	\begin{equation}\label{bendingT}
		\bending_{2\to 1}=-\bending_1+2f_{2\to 1}^2-2\cosh(2\xi_2).
	\end{equation}
	At the same time, $\gamma_1=\mathcal{T}_{\xi_1,f_1}(\gamma)$ is a $\mathcal{T}$-transform of $\gamma$ with parameter $\xi_1$ and transforming function $f_1$. Hence, $\bending_1=-\bending+2f_1^2-2\cosh(2\xi_1)$ which we substitute in \eqref{bendingT}, obtaining
	\begin{equation*}\bending_{2\to 1}=\bending-2f_1^2+2\cosh(2\xi_1)+2f_{2\to 1}^2-2\cosh(2\xi_2).\end{equation*}
	The result then follows immediately from the definition of $f_{2\to 1}$.
\end{proof}

\section{Geometric realization of the B\"{a}cklund transformation \\ for the KdV}\label{section4}

In this section, we will use the $\mathcal{T}$-transform on null curves to show a geometric realization of the B\"{a}cklund transformation for the KdV equation. We first briefly recall some basic facts about this B\"{a}cklund transformation.

\subsection{B\"{a}cklund transformation for the KdV}\label{section4.1}

The Korteweg--de Vries (KdV) equation is the PDE given by
\begin{equation}\label{KdV2}
	\partial_t\bending+\partial_s^3\bending-6\bending \partial_s\bending=0.
\end{equation}

Let $J,I\subseteq\R$ be two open intervals (for convenience, we assume $0\in J,I$) and consider a~solution $\bending\colon (s,t)\in J\times I\subseteq\R^2\longmapsto\bending(s,t)\in\R$ of the KdV equation \eqref{KdV2}. The \emph{B\"{a}cklund transform} of $\bending$ with spectral parameter $\lambda\in\R\setminus\{0\}$ and transforming function $f\colon J\times I\subseteq\R^2\longrightarrow\R$ is the function defined by
\begin{equation}\label{backlund}
	\widetilde{\bending}=-\bending+2f^2-2\lambda.
\end{equation}

In~\cite{WE}, Wahlquist and Estabrook showed that if the transforming function $f(s,t)$ is a solution of the so-called Wahlquist--Estabrook equation
\begin{equation}\label{EW}
	{\rm d}f=\bigl(\bending-f^2+\lambda\bigr){\rm d}s+\bigl(4\lambda\bigl[f^2-\lambda\bigr]-2\bigl[f^2+\lambda\bigr]+2\bending^2+2f\partial_s\bending-\partial_s^2\bending\bigr){\rm d}t,
\end{equation}
then the B\"{a}cklund transform \eqref{backlund} of $\bending$ with spectral parameter $\lambda\neq 0$ and transforming function~$f$ is another solution of the KdV equation \eqref{KdV2}.

The Wahlquist--Estabrook equation \eqref{EW} is an overdetermined system whose compatibility equation is the KdV equation \eqref{KdV2}. As a consequence, this system is locally solvable, in the sense that for every $(s_o,t_o)\in J\times I\subseteq\R^2$ and every constant $c\in\R$ there locally exists a unique solution of \eqref{EW} with initial condition $f(s_o,t_o)=c$.

We will next describe a procedure to construct this solution. This method will employ the extended frames of solutions of the KdV equation~\cite{TU}.

For a function $\bending\colon J\times I\subseteq\R^2\longrightarrow\R$ and a constant $\lambda\in\R$, define the $\mathfrak{sl}(2,\R)$-valued $1$-form
\begin{equation}\label{alpha}
	\Gamma_{\lambda}=\mathcal{K}_\lambda\,{\rm d}s+\mathcal{P}_\lambda\,{\rm d}t,
\end{equation}
where $\mathcal{K}_\lambda$ and $\mathcal{P}_\lambda$ are given by
\begin{equation}\label{KP}
	\mathcal{K}_\lambda=\begin{pmatrix} 0 & \bending+\lambda \\ 1 & 0\end{pmatrix}, \qquad \mathcal{P}_\lambda=\begin{pmatrix} -\partial_s\bending & -\partial_s^2\bending+2\bending^2-2\lambda\bending-4\lambda^2 \\ 2\bending-4\lambda & \partial_s\bending \end{pmatrix}.
\end{equation}
The $1$-form $\Gamma_{\lambda}$ satisfies the Maurer--Cartan compatibility equation if and only if the function~$\bending(s,t)$ satisfies the KdV equation \eqref{KdV2}. Consequently, as shown in~\cite{ZF}, for a given solution~$\bending$ of~\eqref{KdV2} and every $\lambda\in\R$ there exists a map $E_\lambda\colon J\times I\subseteq\R^2\longrightarrow\SL$ such that%
\begin{equation}\label{dE}
	{\rm d}E_\lambda=E_\lambda\,\Gamma_{\lambda},\qquad E_\lambda(0,0)={\rm Id}.
\end{equation}
The maps $E_\lambda$ depend in a real analytic fashion on $\lambda\in\R$. The map $E_\lambda$ is called an \emph{extended frame} of $\bending$ with spectral parameter $\lambda\in\R$~\cite{TU}.\footnote{Observe that in the paper~\cite{TU}, the spectral parameter $\lambda$ is a complex number, while here we are restricting it to real values.}

Consider the extended frames $E_\lambda$, $\lambda\in\R$, of a solution $\bending$ of the KdV equation \eqref{KdV2} and define\begin{equation}\label{TF}
	\begin{pmatrix} x \\ y \end{pmatrix}=E_\lambda^{-1}E_\lambda(s_o,t_o)\begin{pmatrix} -c \\ 1 \end{pmatrix}.
\end{equation}
Then, the function $f=-x/y$ is a solution\footnote{The function $f=-x/y$ is a local solution of \eqref{EW}. Indeed, this solution is singular at the zero locus of the function $y$.} of the Wahlquist--Estabrook equation \eqref{EW} with initial condition $f(s_o,t_o)=c$~\cite{TU}.

\begin{Remark}\label{WE-R}
	If $f(s,t)$ is a solution of the Wahlquist--Estabrook equation \eqref{EW}, then for every $t\in I$ fixed the function $f_t(s)$ is a solution of the Riccati equation \eqref{Riccati}.
\end{Remark}

\subsection{Geometric realization}

In order to describe the geometric realization of the B\"{a}cklund transformation for the KdV equation, we begin by recalling the definition of the LIEN flow and a result shown in~\cite{MP} regarding the procedure to construct the solutions of this flow.

Consider a smooth one parameter family of null curves $\gamma\colon (s,t)\in J\times I\subseteq\R^2\longmapsto \gamma(s,t)=\gamma_t(s)\in\AdS$ without inflection points and parameterized by the proper time. In other words, for each $t\in I$, we have a null curve $\gamma_t:J\subseteq\R\longrightarrow\AdS$ satisfying the assumptions of Remark~\ref{assumptions}.

The LIEN flow is the evolution equation for null curves in $\AdS$ given by
\begin{equation}\label{LIEN2}
	\partial_t\gamma=2\partial_s^3\gamma-6\bending\partial_s\gamma.
\end{equation}
In~\cite{MP}, we showed that the induced evolution equation on the bending $\bending$ of $\gamma$ is the KdV equation~\eqref{KdV2}. In addition, we gave a procedure to construct solutions of the LIEN flow \eqref{LIEN2} beginning with solutions $\bending$ of \eqref{KdV2} and employing suitable extended frames $E_\lambda$ of $\bending$. For the sake of completeness, we state this result here.

\begin{Theorem}[{\cite[Theorem 4.2]{MP}}]\label{induced}
	Let $\gamma\colon J\times I\subseteq\R^2\longrightarrow\AdS$ be a solution of the LIEN flow~\eqref{LIEN2}. Then, the bending $\bending(s,t)$ of $\gamma(s,t)$ evolves according to the KdV equation \eqref{KdV2}.
	
	Conversely, if $\bending\colon J\times I\subseteq\R^2\longrightarrow\R$ is a smooth solution of the KdV equation \eqref{KdV2}, then
	\begin{equation*}
		\gamma=E_1E_{-1}^{-1}\colon\ J\times I\subseteq\R^2\longrightarrow\AdS
	\end{equation*}
	is a solution of the LIEN flow \eqref{LIEN2} with bending $\bending$, where $E_\lambda$, $\lambda=-1,1$, are the extended frames of $\bending$ with spectral parameters $\lambda=-1,1$, respectively.
\end{Theorem}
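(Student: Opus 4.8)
The plan is to run both implications through the spinor frame field $(F_+,F_-)$, exploiting the key observation that the $s$-part $\mathcal{K}_\lambda$ of the $1$-form $\Gamma_\lambda$ in \eqref{alpha}--\eqref{KP}, evaluated at $\lambda=1$ and $\lambda=-1$, is exactly the coefficient matrix appearing in the spinorial Frenet-type equations \eqref{dF+} and \eqref{dF-}. Thus, for a one-parameter family $\gamma=\gamma_t$ as in the statement, for each fixed $t$ the spinor frame satisfies $\partial_s F_\pm=F_\pm\mathcal{K}_{\pm1}$, and I will write $\Phi:=F_+^{-1}\partial_t F_+$ and $\Psi:=F_-^{-1}\partial_t F_-$ for the a priori unknown $\mathfrak{sl}(2,\R)$-valued $t$-connections. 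Conversely, any $\SL$-valued map $F$ with $\partial_s F=F\mathcal{K}_{\pm1}$ is, read column-wise, the canonical central affine frame of a star-shaped curve with central affine curvature $\bending\pm 1$, so Theorem~\ref{relation} is available in both directions.

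For the converse implication I would start from a solution $\bending$ of \eqref{KdV2}, take the extended frames $E_1,E_{-1}$ defined by \eqref{dE}, and put $\gamma=E_1E_{-1}^{-1}$. For each fixed $t$, since $\partial_s E_{\pm1}=E_{\pm1}\mathcal{K}_{\pm1}$, the first columns of $E_{\pm1}$ are star-shaped curves with central affine curvatures $\curvature_\pm=\bending\pm1$ (so that $(\curvature_+-\curvature_-)/2=1$), and Theorem~\ref{relation} gives that $\gamma_t$ is a null curve with bending $\bending(\cdot,t)$ and spinor frame $(E_1,E_{-1})$; smoothness (indeed real-analyticity) in $t$ is clear. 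It then remains to verify the flow \eqref{LIEN2}. For this I would use the identity $\partial_s\bigl(E_1XE_{-1}^{-1}\bigr)=E_1\bigl(\mathcal{K}_1X-X\mathcal{K}_{-1}+\partial_sX\bigr)E_{-1}^{-1}$ to set up the recursion $W_0={\rm Id}$, $W_{k+1}=\mathcal{K}_1W_k-W_k\mathcal{K}_{-1}+\partial_sW_k$, so that $\partial_s^k\gamma=E_1W_kE_{-1}^{-1}$; a short computation gives $W_1=\bigl(\begin{smallmatrix}0&2\\0&0\end{smallmatrix}\bigr)$ and $W_3=\bigl(\begin{smallmatrix}0&4\bending\\-4&0\end{smallmatrix}\bigr)$. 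On the other hand $\partial_t\gamma=E_1(\mathcal{P}_1-\mathcal{P}_{-1})E_{-1}^{-1}$, and \eqref{KP} gives $\mathcal{P}_1-\mathcal{P}_{-1}=\bigl(\begin{smallmatrix}0&-4\bending\\-8&0\end{smallmatrix}\bigr)=2W_3-6\bending\,W_1$, whence $\partial_t\gamma=2\partial_s^3\gamma-6\bending\,\partial_s\gamma$, i.e.\ \eqref{LIEN2}.

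For the forward implication, assume $\gamma$ solves \eqref{LIEN2}. Exactly as above one has $\partial_s^k\gamma=F_+W_kF_-^{-1}$ and $\partial_t\gamma=F_+(\Phi-\Psi)F_-^{-1}$, so \eqref{LIEN2} forces $\Phi-\Psi=2W_3-6\bending\,W_1=\bigl(\begin{smallmatrix}0&-4\bending\\-8&0\end{smallmatrix}\bigr)$. That is only three scalar equations; the remaining three come from the compatibility $\partial_t\partial_sF_\pm=\partial_s\partial_tF_\pm$, i.e.\ the zero-curvature equations $\partial_t\mathcal{K}_1-\partial_s\Phi=[\mathcal{K}_1,\Phi]$ and $\partial_t\mathcal{K}_{-1}-\partial_s\Psi=[\mathcal{K}_{-1},\Psi]$, which hold automatically because $F_\pm$ are honest smooth frames along the family. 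Writing $\Phi=\bigl(\begin{smallmatrix}a&b\\c&-a\end{smallmatrix}\bigr)$, the zero-curvature equation for $\mathcal{K}_1$ yields $a=-\tfrac12\partial_sc$, $b=(\bending+1)c-\tfrac12\partial_s^2c$, together with one scalar relation between $\partial_t\bending$ and $c$; likewise for $\Psi=\bigl(\begin{smallmatrix}a'&b'\\c'&-a'\end{smallmatrix}\bigr)$ with $\bending+1$ replaced by $\bending-1$. Substituting $c'=c+8$ and $b'=b+4\bending$ (the entries of $\Phi-\Psi$) into these identities pins down $c=2\bending-4$, whence $a=-\partial_s\bending$ and $b=-\partial_s^2\bending+2\bending^2-2\bending-4$; that is, $\Phi=\mathcal{P}_1$ and, symmetrically, $\Psi=\mathcal{P}_{-1}$. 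Plugging $c=2\bending-4$ into the remaining scalar relation then gives $\partial_t\bending+\partial_s^3\bending-6\bending\,\partial_s\bending=0$, which is \eqref{KdV2}. (Equivalently, $F_+$ is now an extended frame for $\Gamma_1$, and flatness of $\Gamma_1$ is by \eqref{alpha}--\eqref{KP} precisely the KdV condition.)

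I expect the main obstacle to be the bookkeeping in the forward implication. The point is that the $s$-connection of the spinor frame is \emph{not} at our disposal: it is forced to be $\mathcal{K}_{\pm1}$ by the proper-time normalization and by the very definition of the bending, so that the only unknowns are $\Phi$ and $\Psi$, and one must check that the compatibility equations together with the flow condition leave exactly no slack. Tracking the three-plus-three scalar equations and solving them for the single entry $c$ is where all the content of the theorem sits; everything else is the mechanical matrix bookkeeping outlined above. A minor additional point to address is that each $\gamma_t$ produced in the converse is future-directed (not merely null), which follows from continuity in $t$ together with the time-orientation conventions fixed in Section~\ref{section2}.
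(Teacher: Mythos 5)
Your proposal is correct: I checked the recursion ($W_1=\bigl(\begin{smallmatrix}0&2\\0&0\end{smallmatrix}\bigr)$, $W_3=\bigl(\begin{smallmatrix}0&4\bending\\-4&0\end{smallmatrix}\bigr)$, $\mathcal{P}_1-\mathcal{P}_{-1}=2W_3-6\bending W_1$) and the forward-direction bookkeeping ($c=2\bending-4$, hence $\Phi=\mathcal{P}_1$, $\Psi=\mathcal{P}_{-1}$ and the KdV equation), and all of it is sound. Note, however, that this paper does not prove the statement at all: it is imported verbatim from \cite[Theorem~4.2]{MP}, so there is no in-paper proof to compare against; what the paper does reveal about the argument in \cite{MP} (Remark~\ref{new}: the extended frames $(E_1,E_{-1})$ are the spinor frame field of $\gamma_t$) is exactly the mechanism your proof is built on, so your route appears to coincide with the original one. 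The only points worth tightening are the ones you already flag in passing: a smooth-in-$t$ choice of the sign-ambiguous spinor frame in the forward direction, and the future-directedness of $\gamma_t$ in the converse.
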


\begin{Remark}\label{new}
	In addition to the statement of Theorem~\ref{induced}, in the proof of this result given in~\cite{MP}, it was also shown that the map $(E_1,E_{-1})$ is the spinor frame field along $\gamma_t$, for every fixed~${t\in I}$.
\end{Remark}

Combining this result and Theorem~\ref{T2} involving $\mathcal{T}$-transforms, we can give a geometric interpretation of the B\"{a}cklund transformation of the KdV as induced by the $\mathcal{T}$-transform of a~solution of the LIEN flow \eqref{LIEN2}.

\begin{Theorem}\label{T4} Let $\gamma\colon J\times I\subseteq\R^2\longrightarrow\AdS$ be a solution of the LIEN flow \eqref{LIEN2} with bending~$\bending(s,t)$ and denote by $(F_+,F_-)$ the spinor frame field along $\gamma(s,t)$. Given a constant $\xi\neq0$ and a solution $f\colon J\times I\subseteq\R^2\longrightarrow\R$ of the Wahlquist--Estabrook equation \eqref{EW} for $\lambda=\cosh(2\xi)$, define the map $\widetilde{\gamma}\colon J\times I\subseteq\R^2\longrightarrow\AdS$ by
	\begin{equation}\label{TLIEN}
		\widetilde{\gamma}=\pm F_+\begin{pmatrix} \tanh(\xi) & -\csch(\xi)\sech(\xi)f \\ 0 & \coth(\xi) \end{pmatrix} F_-^{-1}.
	\end{equation}
	Then,
	\begin{enumerate}\itemsep=0pt
		\item[$1.$] For every $t\in I$, the null curve $\widetilde{\gamma}_t$ is a $\mathcal{T}$-transform of $\gamma_t$ with parameter $\xi$ and transforming function $f_t$.
		\item[$2.$] The map $\widetilde{\gamma}\colon J\times I\subseteq\R^2\longrightarrow\AdS$ is a solution of the LIEN flow \eqref{LIEN2} where its bending $\widetilde{\bending}$ is the B\"{a}cklund transform of $\bending$ with spectral parameter $\lambda$ and transforming function $f$.
	\end{enumerate}
\end{Theorem}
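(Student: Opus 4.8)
The statement splits cleanly into two assertions, and the natural strategy is to prove the first by a direct application of Theorem~\ref{T2} at each fixed time, and then leverage the structure of the extended frames to obtain the second. For part~1, fix $t\in I$. By Theorem~\ref{induced} and Remark~\ref{new}, the pair $(F_+,F_-)=(E_1,E_{-1})$ is the spinor frame field along $\gamma_t$, and by Remark~\ref{WE-R} the function $f_t(s)$ solves the Riccati equation \eqref{Riccati} with $\bending$ replaced by $\bending(\cdot,t)$ and the same $\xi$ (since $\lambda=\cosh(2\xi)$). The defining formula \eqref{TLIEN} is then literally \eqref{gammatilda} for the curve $\gamma_t$, so Theorem~\ref{T2} immediately gives that $\widetilde{\gamma}_t=\mathcal{T}_{\xi,f_t}(\gamma_t)$ and that its bending is $\widetilde{\bending}(\cdot,t)=-\bending(\cdot,t)+2f_t^2-2\cosh(2\xi)$, i.e.\ \eqref{T2:f3}.

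For part~2, I would first observe that \eqref{T2:f3}, holding for every $t$, says precisely that $\widetilde{\bending}=-\bending+2f^2-2\lambda$ with $\lambda=\cosh(2\xi)$, which is the B\"{a}cklund transform formula \eqref{backlund}. Since $\bending$ solves the KdV equation \eqref{KdV2} and $f$ solves the Wahlquist--Estabrook equation \eqref{EW} for this $\lambda$, the Wahlquist--Estabrook theorem (\cite{WE}, recalled in Section~\ref{section4.1}) guarantees that $\widetilde{\bending}$ is again a solution of \eqref{KdV2}. It then remains to show that the map $\widetilde{\gamma}$ built in \eqref{TLIEN} is a solution of the LIEN flow \eqref{LIEN2} whose bending is this $\widetilde{\bending}$. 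The cleanest route is to invoke the converse part of Theorem~\ref{induced}: the solution of the LIEN flow with bending $\widetilde{\bending}$ is $\widetilde{E}_1\widetilde{E}_{-1}^{-1}$, where $\widetilde{E}_{\pm1}$ are the extended frames of $\widetilde{\bending}$ with spectral parameters $\pm1$. So the whole of part~2 reduces to identifying the two factors $F_\pm G_\pm(\xi,f)$ appearing in \eqref{TLIEN} (using the notation $G_\pm$ of the proof of Theorem~\ref{T3}, with $\phi=f$) with the extended frames $\widetilde{E}_{\pm1}$ of $\widetilde{\bending}$.

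To carry out that identification, I would compute $\bigl(F_+G_+(\xi,f)\bigr)^{-1}\d\bigl(F_+G_+(\xi,f)\bigr)$ using $\d F_+=F_+\Gamma_1$ (the $s$-part is \eqref{dF+}, and the $t$-part is $\mathcal P_1\,\d t$ by Remark~\ref{new} and \eqref{dE}), together with the Riccati equation in $s$ and the $t$-component of the Wahlquist--Estabrook equation \eqref{EW} for $f$. The claim is that this $1$-form equals $\widetilde{\Gamma}_1=\widetilde{\mathcal K}_1\,\d s+\widetilde{\mathcal P}_1\,\d t$, the Maurer--Cartan form attached to $\widetilde{\bending}$ via \eqref{KP}; the $\d s$-part is already handled inside the proof of Theorem~\ref{T2}, so only the $\d t$-part is new. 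A symmetric computation with $G_-(\xi,f)$ and $\d F_-=F_-\Gamma_{-1}$ handles the other factor. Matching the base-point normalizations of \eqref{dE} (which one may arrange, or absorb into the congruence ambiguity of Remark~\ref{assumptions}), this shows $F_\pm G_\pm(\xi,f)=\widetilde{E}_{\pm1}$ and hence $\widetilde{\gamma}=\widetilde{E}_1\widetilde{E}_{-1}^{-1}$, which by Theorem~\ref{induced} is the desired LIEN solution with bending $\widetilde{\bending}$.

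\textbf{Main obstacle.} The only genuinely nontrivial step is the verification that the $\d t$-component of $\bigl(F_+G_+(\xi,f)\bigr)^{-1}\d\bigl(F_+G_+(\xi,f)\bigr)$ equals $\widetilde{\mathcal P}_1$ (and similarly for the minus sign): this is the algebraic heart of the Wahlquist--Estabrook construction, and it requires substituting the $t$-derivative of $f$ from \eqref{EW}, the $s$-derivatives of $f$ from \eqref{Riccati} and its $s$-differentiations, and the expression \eqref{T2:f3} for $\widetilde{\bending}$, and then simplifying a $2\times 2$ matrix identity. I expect this to be a somewhat lengthy but entirely mechanical computation, exactly parallel to the classical proof that the Wahlquist--Estabrook equation is the compatibility condition for the gauge transformation relating $\Gamma_\lambda$ and $\widetilde{\Gamma}_\lambda$; no conceptual difficulty beyond bookkeeping is anticipated.
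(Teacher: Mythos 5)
Your proposal is correct and follows essentially the same route as the paper: part 1 via Remark~\ref{WE-R} plus Theorem~\ref{T2} applied at each fixed $t$, and part 2 via the Wahlquist--Estabrook theorem together with the converse direction of Theorem~\ref{induced} and the identification of $\widetilde{F}_\pm=F_\pm G_\pm(\xi,f)$ with the extended frames of $\widetilde{\bending}$. The only difference is that you spell out the gauge computation (matching the $\d t$-component of the Maurer--Cartan form with $\widetilde{\mathcal P}_{\pm 1}$) that the paper leaves implicit in its citation of \eqref{T1:f4} and \eqref{T1:f4-}.
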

\begin{proof} Suppose that $\gamma$ is a solution of the LIEN flow \eqref{LIEN2} and that $f$ satisfies the Wahlquist--Estabrook equation \eqref{EW}. As noticed in Remark~\ref{WE-R}, for every $t\in I$, the function $f_t(s)$ is a~solution of the Riccati equation \eqref{Riccati}.
	
	It then follows from Theorem~\ref{T2} that, for every $t\in I$ fixed, the null curve $\widetilde{\gamma}_t(s)=\widetilde{\gamma}(s,t)$ defined on \eqref{TLIEN} of the statement is a $\mathcal{T}$-transform of the null curve $\gamma_t(s)=\gamma(s,t)$ with parameter~$\xi$ and transforming function $f_t$. In addition, the bending $\widetilde{\bending}_t$ of $\widetilde{\gamma}_t$ is
	\begin{equation*}\widetilde{\bending}_t=-\bending_t+2f_t^2-2\cosh(2\xi),\end{equation*}
	where $\bending_t$ is the bending of $\gamma_t$. Clearly, since $\lambda=\cosh(2\xi)$, $\widetilde{\bending}_t$ is the B\"{a}cklund transform of $\bending_t$ with spectral parameter $\lambda$ and transforming function $f_t$.
	
	As mentioned above, the B\"{a}cklund transform $\widetilde{\bending}_t$ is also a solution of the KdV equation \eqref{KdV2}. Hence, we deduce from Theorem~\ref{induced} (see also Remark~\ref{new}) and the expressions of $\widetilde{F}_+$ and $\widetilde{F}_-$ given in \eqref{T1:f4} and \eqref{T1:f4-}, that $\widetilde{\gamma}\colon J\times I\subseteq\R^2\longrightarrow\AdS$ is a solution of the LIEN flow.
\end{proof}

\begin{Definition} In analogy with the $\mathcal{T}$-transform (for $\chi=0$) on null curves, the solution $\widetilde{\gamma}$ of the LIEN flow \eqref{LIEN2} given by \eqref{TLIEN} is called the \emph{$\mathcal{T}$-transform of $\gamma$ with parameter $\xi$ and transforming function $f$}. With some abuse of notation, we also denote it by $\mathcal{T}_{\xi,f}(\gamma)$.
\end{Definition}

The extension to solutions of the LIEN flow \eqref{LIEN2} of the $\mathcal{T}$-transform for $\chi=0$ on null curves also satisfies a permutability theorem.

\begin{Theorem}\label{5.13} Let $\gamma\colon J\times I\subseteq\R^2\longrightarrow\AdS$ be a solution of the LIEN flow \eqref{LIEN2} with bending $\bending(s,t)$ and consider two $\mathcal{T}$-transforms of $\gamma$, namely, $\gamma_1=\mathcal{T}_{\xi_1,f_1}(\gamma)$ and $\gamma_2=\mathcal{T}_{\xi_2,f_2}(\gamma)$ with parameters $\xi_1\neq \xi_2$ and transforming functions $f_1\neq f_2$, respectively. Then, the function\footnote{Observe that the functions $f_{2\to 1}$, $f_{1\to 2}$, $\bending_{2\to 1}$ and $\bending_{1\to 2}$ of the statement of Theorem~\ref{5.13} are functions of two variables $(s,t)\in J\times I\subseteq\R^2$, while the analogue ones of Theorem~\ref{T3} are functions of just one variable $s\in J\subseteq\R$. For simplicity, we avoid explicitly describing this in the statement.} $f_{2\to 1}$ $($defined as in Theorem $\ref{T3})$ satisfies the Wahlquist--Estabrook equation \eqref{EW} for $\bending_1$ and $\lambda_2=\cosh(2\xi_2)$, while $f_{1\to 2}$ $($also defined as in Theorem $\ref{T3})$ satisfies \eqref{EW} for $\bending_2$ and $\lambda_1=\cosh(2\xi_1)$. Moreover,
$\mathcal{T}_{\xi_2,f_{2\to 1}}(\mathcal{T}_{\xi_1,f_1}(\gamma))=\mathcal{T}_{\xi_1,f_{1\to 2}}(\mathcal{T}_{\xi_2,f_2}(\gamma))$,
	and the bending of $\mathcal{T}_{\xi_2,f_{2\to 1}}(\mathcal{T}_{\xi_1,f_1}(\gamma))$ is the function $\bending_{2\to 1}=\bending_{1\to 2}$ $($defined as in Theorem $\ref{T3})$. In particular, the bending $\bending_{2\to 1}$ is a solution of the KdV equation \eqref{KdV2}.
\end{Theorem}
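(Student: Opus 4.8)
The plan is to leverage the already-established permutability theorem for the $\mathcal{T}$-transform on individual null curves (Theorem~\ref{T3}) together with the LIEN-flow version of the $\mathcal{T}$-transform (Theorem~\ref{T4}), treating the two-variable objects as one-parameter families of their one-variable counterparts. First I would observe that, by Theorem~\ref{T4}, for each fixed $t\in I$ the curve $\gamma_{1,t}=\mathcal{T}_{\xi_1,f_{1,t}}(\gamma_t)$ and $\gamma_{2,t}=\mathcal{T}_{\xi_2,f_{2,t}}(\gamma_t)$, and the transforming functions $f_1,f_2$ solve the Wahlquist--Estabrook equation \eqref{EW} for the appropriate spectral parameters. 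Applying Theorem~\ref{T3} slice-by-slice in $t$ immediately gives that $f_{2\to1}$ and $f_{1\to2}$ (defined pointwise by the same algebraic formulas) satisfy, for each $t$, the Riccati equations of Theorem~\ref{T3}, namely $f_{2\to1}'+f_{2\to1}^2=\bending_1+\cosh(2\xi_2)$ and the analogue for $f_{1\to2}$; and that the permutability identity $\mathcal{T}_{\xi_2,f_{2\to1}}(\gamma_{1,t})=\mathcal{T}_{\xi_1,f_{1\to2}}(\gamma_{2,t})$ holds for every $t$, with bending $\bending_{2\to1}=\bending_{1\to2}$ given by the stated formula.

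The key remaining point — and where the real content lies — is to upgrade the Riccati equation (the $s$-part) satisfied by $f_{2\to1}$ to the full Wahlquist--Estabrook equation \eqref{EW}, i.e.\ to verify the $t$-part $\partial_t f_{2\to1}=4\lambda_2[f_{2\to1}^2-\lambda_2]-2[f_{2\to1}^2+\lambda_2]+2\bending_1^2+2f_{2\to1}\partial_s\bending_1-\partial_s^2\bending_1$ with $\lambda_2=\cosh(2\xi_2)$. The approach I would take is the standard one for Bianchi permutability in integrable systems: since the Wahlquist--Estabrook system is overdetermined with the KdV equation as its compatibility condition, a solution of the $s$-part extends to a solution of the full system once one checks consistency at a single point. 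Concretely, $\bending_1$ solves the KdV equation (by Theorem~\ref{T4}, part~2), so the Wahlquist--Estabrook system for $(\bending_1,\lambda_2)$ is compatible and thus has a local solution through the initial value $f_{2\to1}(s_o,t_o)$; by uniqueness of solutions to the Riccati ODE this local solution must coincide with the algebraically-defined $f_{2\to1}$ on the slice $t=t_o$, and one then argues that the two agree everywhere. Alternatively — and more in the spirit of the algebraic computation in Theorem~\ref{T3} — one can differentiate the explicit formula $f_{2\to1}=-f_1+(\cosh(2\xi_1)-\cosh(2\xi_2))/(f_1-f_2)$ with respect to $t$, substitute the Wahlquist--Estabrook $t$-evolutions of $f_1$ and $f_2$ (which are known), and verify directly that the result matches the required right-hand side; this is a finite rational identity in $f_1,f_2,\bending,\partial_s\bending,\partial_s^2\bending$ and the $\cosh(2\xi_i)$.

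I would then assemble the conclusion: $f_{2\to1}$ is a transforming function for $\gamma_1$ in the sense of Theorem~\ref{T4} with parameter $\xi_2$, so $\mathcal{T}_{\xi_2,f_{2\to1}}(\gamma_1)$ is again a solution of the LIEN flow, with bending the B\"acklund transform of $\bending_1$; symmetrically for $f_{1\to2}$ and $\gamma_2$. The permutability identity $\mathcal{T}_{\xi_2,f_{2\to1}}(\mathcal{T}_{\xi_1,f_1}(\gamma))=\mathcal{T}_{\xi_1,f_{1\to2}}(\mathcal{T}_{\xi_2,f_2}(\gamma))$ then follows because it holds on every $t$-slice (by Theorem~\ref{T3}) and both sides are the same two-variable map. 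Finally, $\bending_{2\to1}=\bending_{1\to2}$ is, by construction via Theorem~\ref{T4}, the B\"acklund transform of a B\"acklund transform of the KdV solution $\bending$, hence itself a solution of the KdV equation \eqref{KdV2}. The main obstacle is precisely the verification of the $t$-part of \eqref{EW} for $f_{2\to1}$ and $f_{1\to2}$; everything else transfers mechanically from Theorems~\ref{T3} and~\ref{T4}. I expect the cleanest write-up to use the compatibility-plus-uniqueness argument rather than the brute-force differentiation, since it avoids a lengthy rational calculation, but I would at least indicate that the direct check also works.
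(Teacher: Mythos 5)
Your overall structure matches the paper's: the paper's proof likewise reduces everything except the Wahlquist--Estabrook verification to the argument of Theorem~\ref{T3} applied slice-by-slice, and then disposes of the remaining point by citing the classical permutability theorem for the B\"acklund transform of the KdV equation. You correctly isolate that remaining point as the only real content. Where you differ is that you attempt to prove it rather than cite it, and here your preferred route has a gap: knowing that $\bending_1$ solves the KdV equation gives local solvability of \eqref{EW} through any initial value, and uniqueness of the Riccati ODE in $s$ forces the genuine Wahlquist--Estabrook solution $h$ to coincide with the algebraically defined $f_{2\to1}$ on the single slice $t=t_o$; but this does not propagate to other slices, because for $t\neq t_o$ the two functions are pinned down by their values at $s=s_o$, and the $t$-evolution of $f_{2\to1}(s_o,\cdot)$ is exactly what you are trying to establish. (One can show that the defect $R=\partial_t f_{2\to1}-(\text{$t$-part of \eqref{EW}})$ satisfies a linear first-order ODE in $s$, so $R\equiv 0$ would follow from $R(s_o,t)=0$ for all $t$ --- but verifying that is again the computation you were hoping to avoid.) So the argument that actually closes is your second alternative, the direct differentiation of $f_{2\to1}=-f_1+(\cosh(2\xi_1)-\cosh(2\xi_2))/(f_1-f_2)$ in $t$ using the known $t$-evolutions of $f_1$ and $f_2$, which is precisely the classical computation the paper invokes by reference. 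With that substitution your proof is correct and essentially the paper's.
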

\begin{proof}
	The proof is analogous to that of Theorem~\ref{T3}. The only remaining part to prove is that $f_{2\to 1}$ and $f_{1\to 2}$ satisfy the suitable Wahlquist--Estabrook equations \eqref{EW}. However, this is a consequence of the classical permutability theorem for the B\"{a}cklund transform of the KdV equation \eqref{KdV2}.
\end{proof}

\section{Construction procedure}\label{section5}

We finish this paper illustrating how to implement the construction of the $\mathcal{T}$-transforms for $\chi=0$ for solutions of the LIEN flow \eqref{LIEN2} with constant bending.

Consider a constant solution $\bending$ of the KdV equation \eqref{KdV2}. We first build the extended frames~$E_\lambda$ of $\bending$ with spectral parameter $\lambda\in\R$. To this end, we must integrate the $\mathfrak{sl}(2,\mathbb{R})$-valued $1$-form $\Gamma_\lambda$ given in \eqref{alpha}.
Since $\bending$ is constant, it follows from the Maurer--Cartan compatibility equation ${\rm d}\Gamma_\lambda+\Gamma_\lambda\wedge\Gamma_\lambda=0$ that $\mathcal{K}_\lambda$ and $\mathcal{P}_\lambda$ commute (see \eqref{KP} for their definition). Therefore, the map $E_\lambda(s,t)={\rm Exp}(\mathcal{K}_\lambda s+\mathcal{P}_\lambda t)$ satisfies \eqref{dE} and so it is an extended frame of $\bending$ with spectral parameter $\lambda\in\mathbb{R}$. A computation involving the definition of $\mathcal{K}_\lambda$ and $\mathcal{P}_\lambda$ given in \eqref{KP} shows that
\begin{equation*}
	E_{\lambda}(s,t) = \begin{pmatrix}\cosh\sigma(s,t)&\sqrt{\bending+\lambda}\,\sinh\sigma(s,t)\\
		\frac{1}{\sqrt{\bending+\lambda}}\,\sinh\sigma(s,t)&\cosh\sigma(s,t)\end{pmatrix},
\end{equation*}
where $\sigma(s,t)=\sqrt{\bending+\lambda}(s+2[\bending-2\lambda]t)$.
Here, we are understanding that $\sinh({\rm i}x)={\rm i}\sin(x)$ and $\cosh({\rm i}x)=\cos(x)$.

We will next construct a B\"{a}cklund transform of $\bending$ with spectral parameter $\lambda\neq 0$ and transforming function $f$. Recall that the transforming function is a solution of the Wahlquist--Estabrook equation \eqref{EW}. Hence, we will use the method explained in Subsection~\ref{section4.1} to construct this solution. If $c\in\R$ is the initial condition, from \eqref{TF} we deduce that the solution $f=-x/y$ of the Wahlquist--Estabrook equation \eqref{EW} is
\begin{equation}\label{f}
	f(s,t)=\frac{c\sqrt{\bending+\lambda}\,\cosh\sigma(s,t)+(\bending+\lambda)\sinh\sigma(s,t)}{\sqrt{\bending+\lambda}\,\cosh\sigma(s,t)+c\,\sinh\sigma(s,t)}.
\end{equation}
Then, $\widetilde{\bending}=-\bending -2\lambda +2f^2$
is the B\"{a}cklund transform  of  $\bending$ with spectral parameter $\lambda\in\R$, that is, a traveling wave solution of the KdV equation \eqref{KdV2}. These solutions are also known as $1$-soliton solutions.

The extended frame $\widetilde{E}_{\omega}$ of $\widetilde{\bending}$ with spectral parameter $\omega\neq \lambda$ is given by
\begin{equation}\label{recon}
	\widetilde{E}_{\omega}(s,t)=\frac{1}{\omega-\lambda}R(-f,\lambda,\omega) E_{\lambda}(s,t) R(-f,\lambda,\omega),
\end{equation}
where
\begin{equation*}R(x,y,z)=\begin{pmatrix}x&x^2-y+z\\1&x\end{pmatrix}.\end{equation*}
This can be verified by checking that $\widetilde{E}_{\omega}$ is a solution of \eqref{dE} for $\widetilde{\bending}$ and $\omega\in\R$.

Iterating the process and using again \eqref{TF} to construct a solution of the Wahlquist--Estabrook equation \eqref{EW} as above \big(but now for $\widetilde{E}_\omega$ given in \eqref{recon}\big), one can compute the transforming function\footnote{After the first step, the construction of the extended frames involve only algebraic manipulations, which can be performed with the help of any software of symbolic computations. However, the resulting formulas are very long and, hence, they have been omitted here.} \smash{$\widetilde{f}$} of \smash{$\widetilde{\bending}$}  with spectral parameter $\omega\neq \lambda$ and initial condition $\widetilde{c}$ as well as the corresponding B\"{a}cklund transform $\widehat{\bending}$ of $\widetilde{\bending}$, which is a $2$-soliton solution of the KdV equation \eqref{KdV2}.

\begin{Remark} At each step, the transforming functions may have singularities. Therefore, to obtain regular solutions, the constants involved in the construction must be chosen appropriately.
\end{Remark}

In the following example, we consider the particular case where the original null curve is closed and has constant bending.

\begin{Example}\label{5.14} Consider the constant solutions of the KdV equation \eqref{KdV2} given by (cf.\ Example~\ref{example})%
	\begin{equation*}\bending_{m,n}=-\frac{m^2+n^2}{m^2-n^2},\end{equation*}
	where $m>n$ are relatively prime natural numbers. In these cases, the associated null curves~$\gamma_{m,n}$ in $\AdS$ are closed.
	
	Then, for a real number $p>0$ fixed we can construct the transforming function $f_{m,n}$ of $\bending_{m,n}$ with spectral parameter
	\begin{equation*}
		\lambda_p = p + \frac{m^2+n^2}{m^2-n^2},
	\end{equation*}
	and initial condition $c=0$, simply substituting this data in \eqref{f}. Moreover, from \eqref{backlund}, we can also obtain the corresponding B\"{a}cklund transform $\widetilde{\bending}_{m,n}$ of $\bending_{m,n}$. In Figure~\ref{example1} we show the transforming function $f_{m,n}$ (left) and the corresponding B\"{a}cklund transform $\widetilde{\bending}_{m,n}$ (right) for suitable choices of $m>n$ and $p>0$.
\begin{figure}[t]\centering
				\includegraphics[height=4cm,width=4cm]{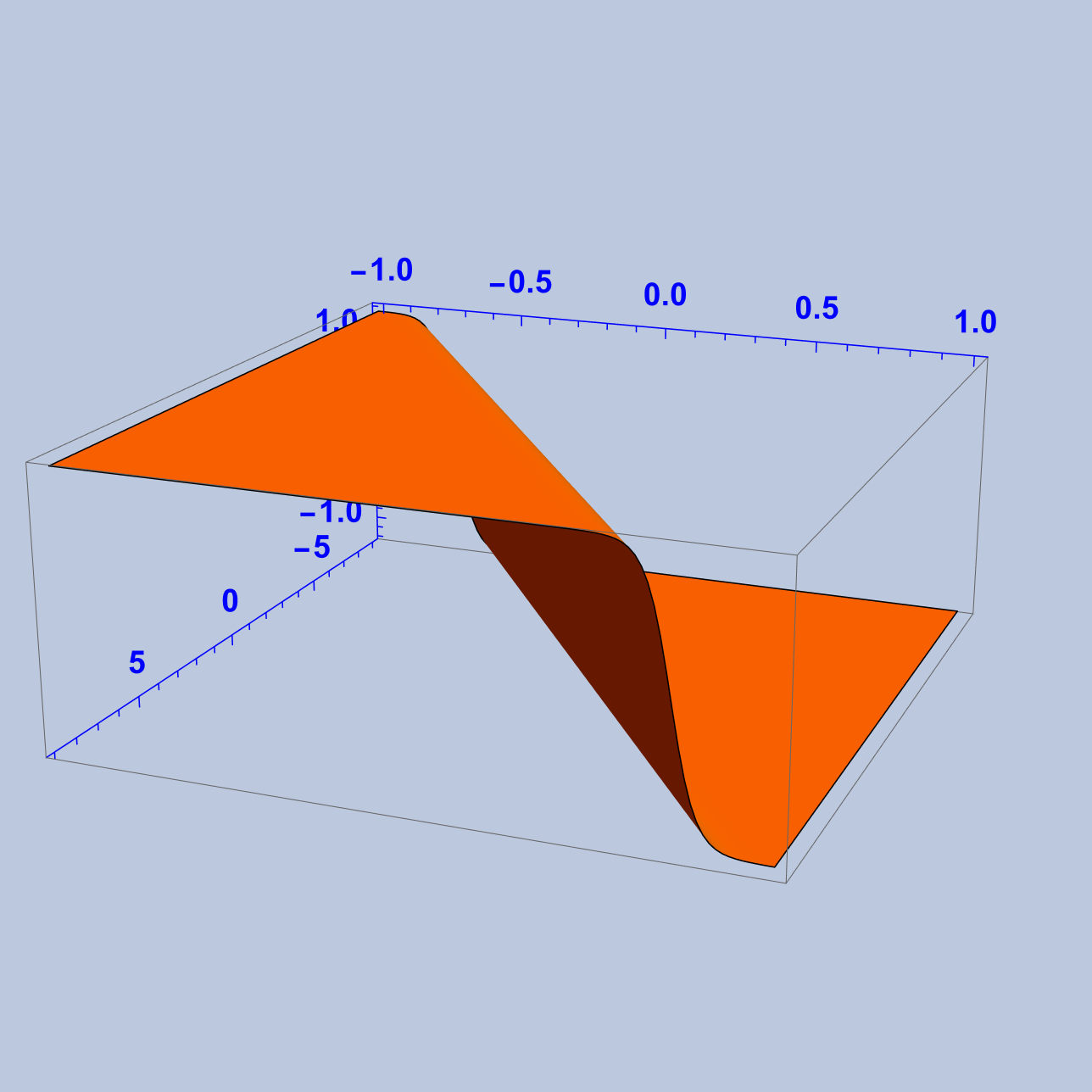}\quad\quad\quad
				\includegraphics[height=4cm,width=4cm]{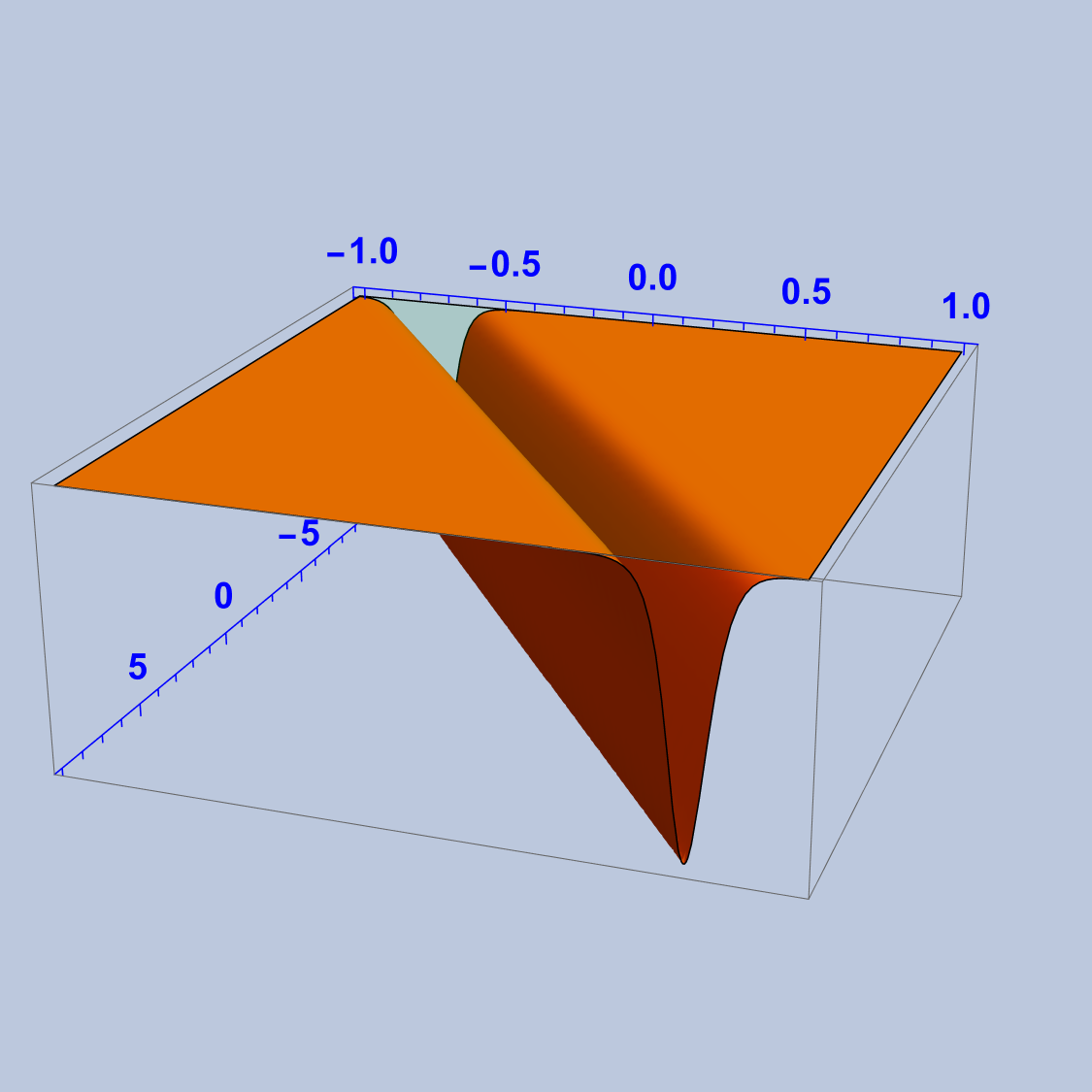}
\caption{Left: The transforming function $f_{m,n}$ of $\bending_{m,n}$ with spectral parameter $\lambda_p$ and initial condition~${c=0}$. Right: The B\"{a}cklund transform $\widetilde{\bending}_{m,n}$ of $\bending_{m,n}$ with spectral parameter $\lambda_p$ and transforming function~$f_{m,n}$. The function $\widetilde{\bending}_{m,n}$ represents a $1$-soliton solution of the KdV equation \eqref{KdV2}. In both cases, $m=4$, $n=1$ and $p=1.4$.} \label{example1}
	\end{figure}
	
	Employing \eqref{TF} and \eqref{recon}, we may iterate the process obtaining the transforming function~$\widetilde{f}_{m,n}$ of~$\widetilde{\bending}_{m,n}$ with spectral parameter
	\begin{equation*}\omega_{p,r}=p+r+\frac{m^2+n^2}{m^2-n^2},\qquad r>0,\end{equation*}
	and initial condition $\widetilde{c}=0$, as well as the B\"{a}cklund transform \smash{$\widehat{\bending}_{m,n}$} of \smash{$\widetilde{\bending}_{m,n}$} with spectral parameter $\omega_{p,r}$ and transforming function \smash{$\widetilde{f}_{m,n}$}. An example of the transforming function \smash{$\widetilde{f}_{m,n}$} and of its corresponding B\"{a}cklund transform is illustrated in Figure~\ref{example2} (left and right, respectively).
	
	\begin{figure}[t]
\centering
				\includegraphics[height=4cm,width=4cm]{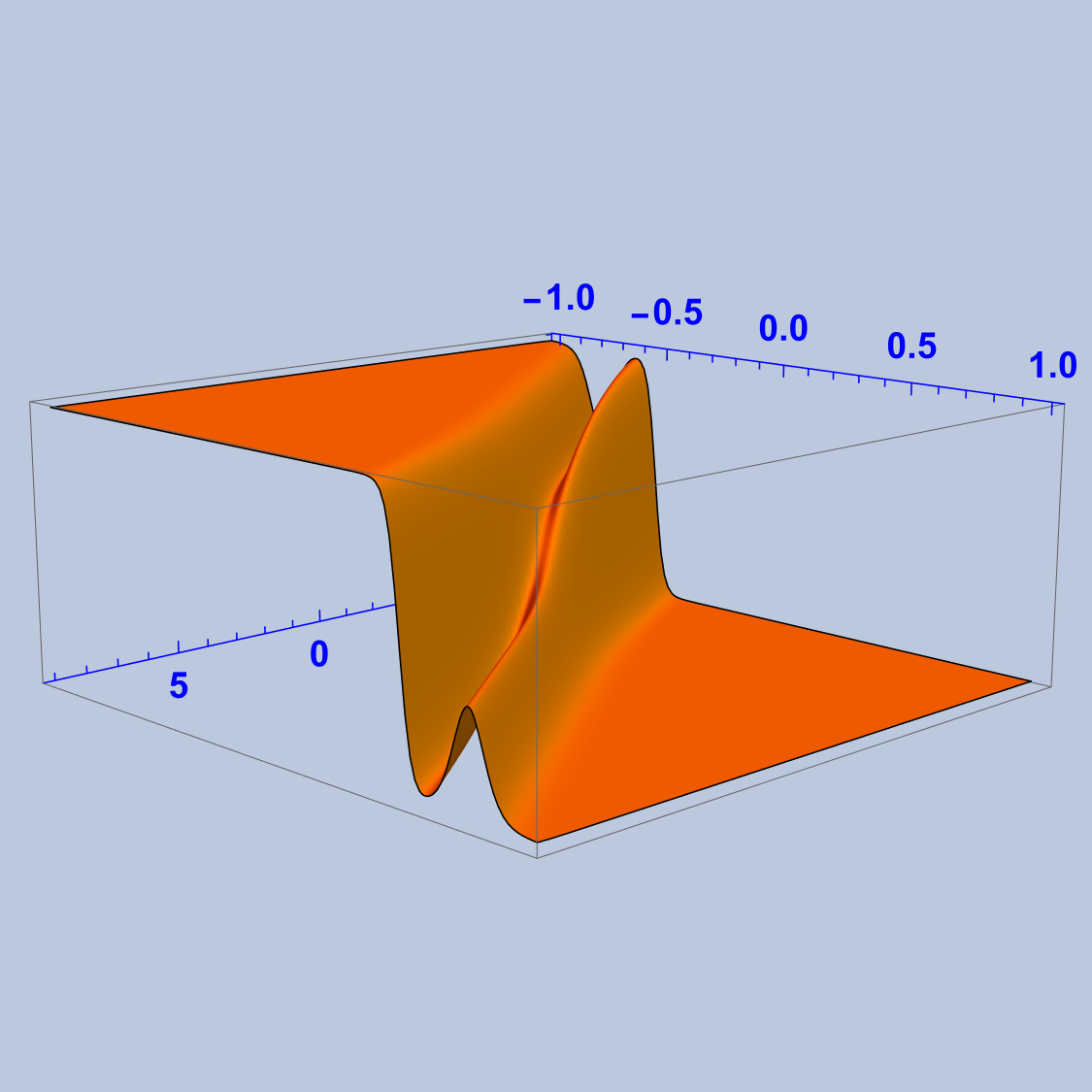}\quad\quad\quad
				\includegraphics[height=4cm,width=4cm]{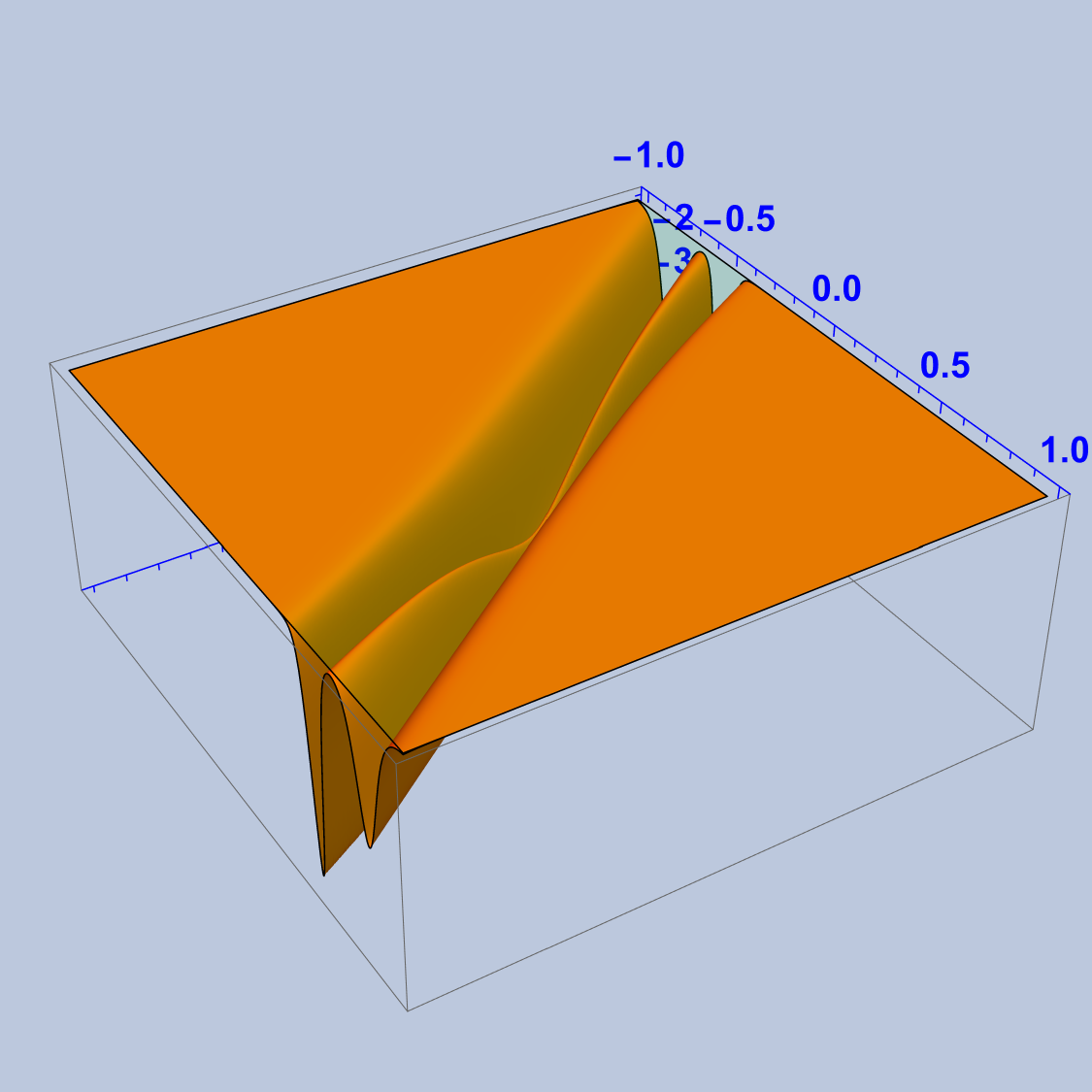}
			\caption{Left: The transforming function $\widetilde{f}_{m,n}$ of $\widetilde{\bending}_{m,n}$ with spectral parameter $\omega_{p,r}$. Right: The~B\"{a}ck\-lund transform $\widehat{\bending}_{m,n}$ of $\widetilde{\bending}_{m,n}$ with spectral parameter $\omega_{p,r}$ and transforming function $\widetilde{f}_{m,n}$. The~function~$\widehat{\bending}_{m,n}$ represents a $2$-soliton solution of the KdV equation \eqref{KdV2}. In both cases, $m=4$, $n=1$, $p=1.4$ and $r=1$.} \label{example2}
	\end{figure}
	
	As explained in Example~\ref{example}, the null curve $\gamma_{m,n}$ in $\AdS$ with bending $\bending_{m,n}$ is a torus knot. In addition, we deduce from Theorem~\ref{induced} that its evolution by the LIEN flow \eqref{LIEN2} is given by $\gamma=E_1E_{-1}^{-1}$, where $E_\lambda$ are the extended frames of $\bending_{m,n}$ with spectral parameter $\lambda=-1,1$, respectively.
	
	Let $\xi_\lambda\neq 0$ be a constant such that $\cosh(2\xi_\lambda)=\lambda_p$. Then, the $\mathcal{T}$-transform for $\chi=0$ of~$\gamma$ with parameter $\xi_\lambda$ and transforming function $f_{m,n}$ is $\mathcal{T}_{\xi_\lambda,f_{m,n}}(\gamma)$ (its explicit expression is given in \eqref{TLIEN} of Theorem~\ref{T4}). Since $\widetilde{\bending}_{m,n}$ is a traveling wave solution of the KdV equation~\eqref{KdV2}, it follows that~$\mathcal{T}_{\xi_\lambda,f_{m,n}}(\gamma)$ is a solution of the LIEN flow \eqref{LIEN2} consisting of the evolution of the initial condition $\gamma_{m,n}$ by rigid motions. Furthermore, for every $t\in I$, the function ${s\in\R\longmapsto\widetilde{\bending}_{m,n}(s,t)-\bending_{m,n}}$ is smooth and rapidly decaying (see the picture on the left of Figure~\ref{example3}). This implies that, for every $t\in I$, the null curve $s\in\R\longmapsto\mathcal{T}_{\xi_\lambda,f_{m,n}}(\gamma(s,t))$ tends asymptotically to two closed null curves with constant bending $\bending_{m,n}$ as $s\to\pm\infty$. More precisely, in practical terms, this curve is made up of two disjoint, but congruent, torus knots connected by an arc (see Figure~\ref{example4}).
	
	\begin{figure}[t]
\centering
				\includegraphics[height=4cm,width=4cm]{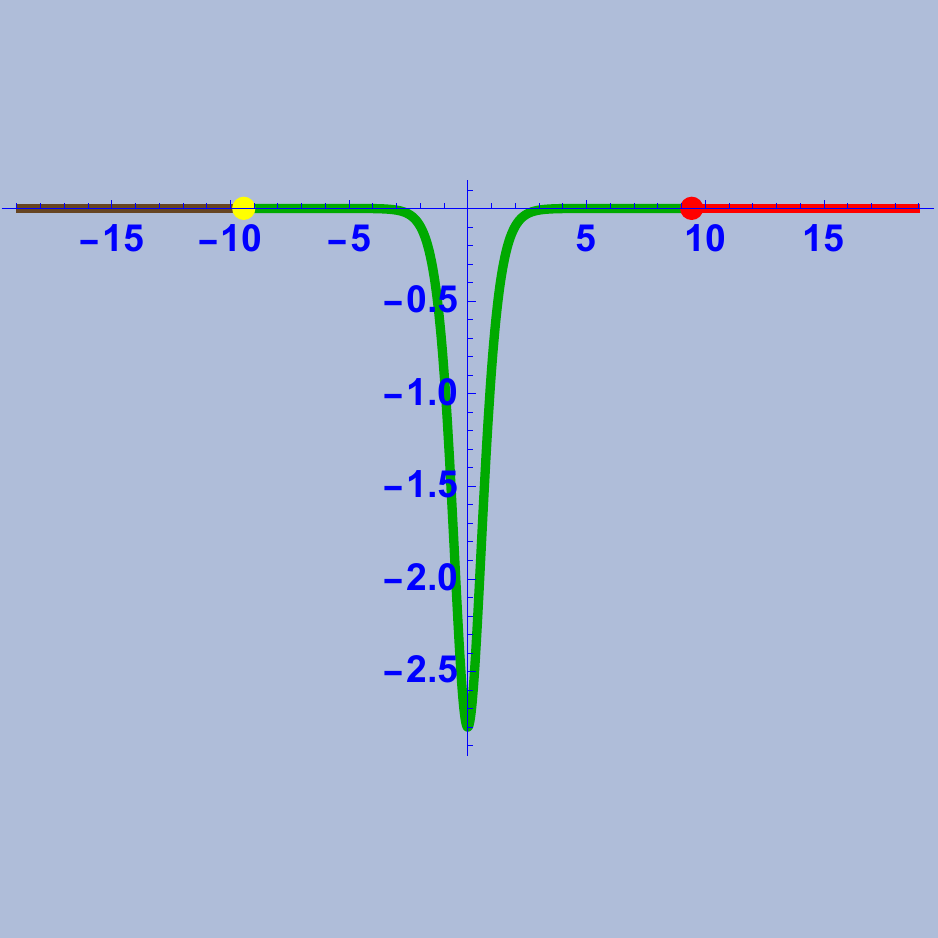}\quad\quad\quad
				\includegraphics[height=4cm,width=4cm]{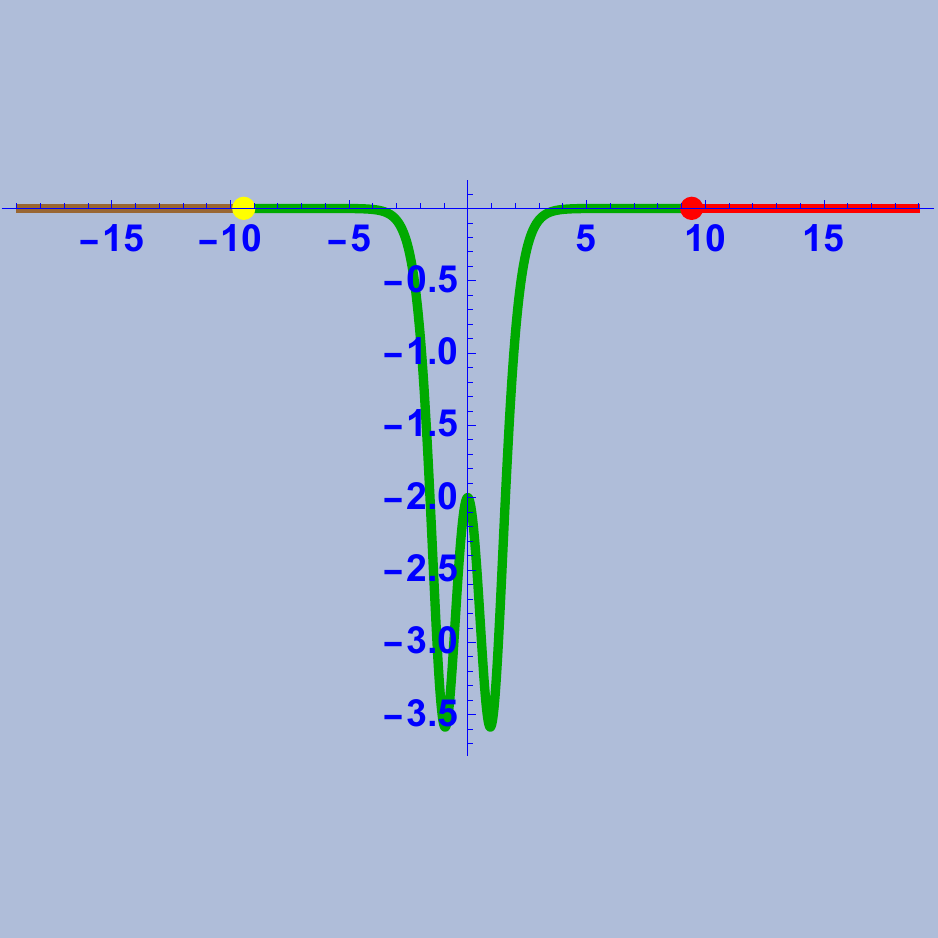}
			\caption{Left: The graph of the rapidly decaying function $s\in \R\longmapsto \widetilde{\bending}_{m,n}(s,0)-\bending_{m,n}$. On the left of the yellow point and on the right of the red point $\lvert\widetilde{\bending}_{m,n}(s,0)-\bending_{m,n}\rvert \lesssim 2.31\times 10^{-9}$.
					Right: The graph of the function $s\in\R\longmapsto  \widehat{\bending}_{m,n}(s,0)-\bending_{m,n}$. On the left of the yellow point and on the right of the red point $\lvert\widehat{\bending}_{m,n}(s,0)-\bending_{m,n}\rvert\lesssim 1.73\times 10^{-8}$. These graphs have been computed for the values
					$m=4$, $n=1$, $p=1.4$ and $r=1$.}  \label{example3}
	\end{figure}
	
	\begin{figure}[t]\centering
				\includegraphics[height=4cm,width=4cm]{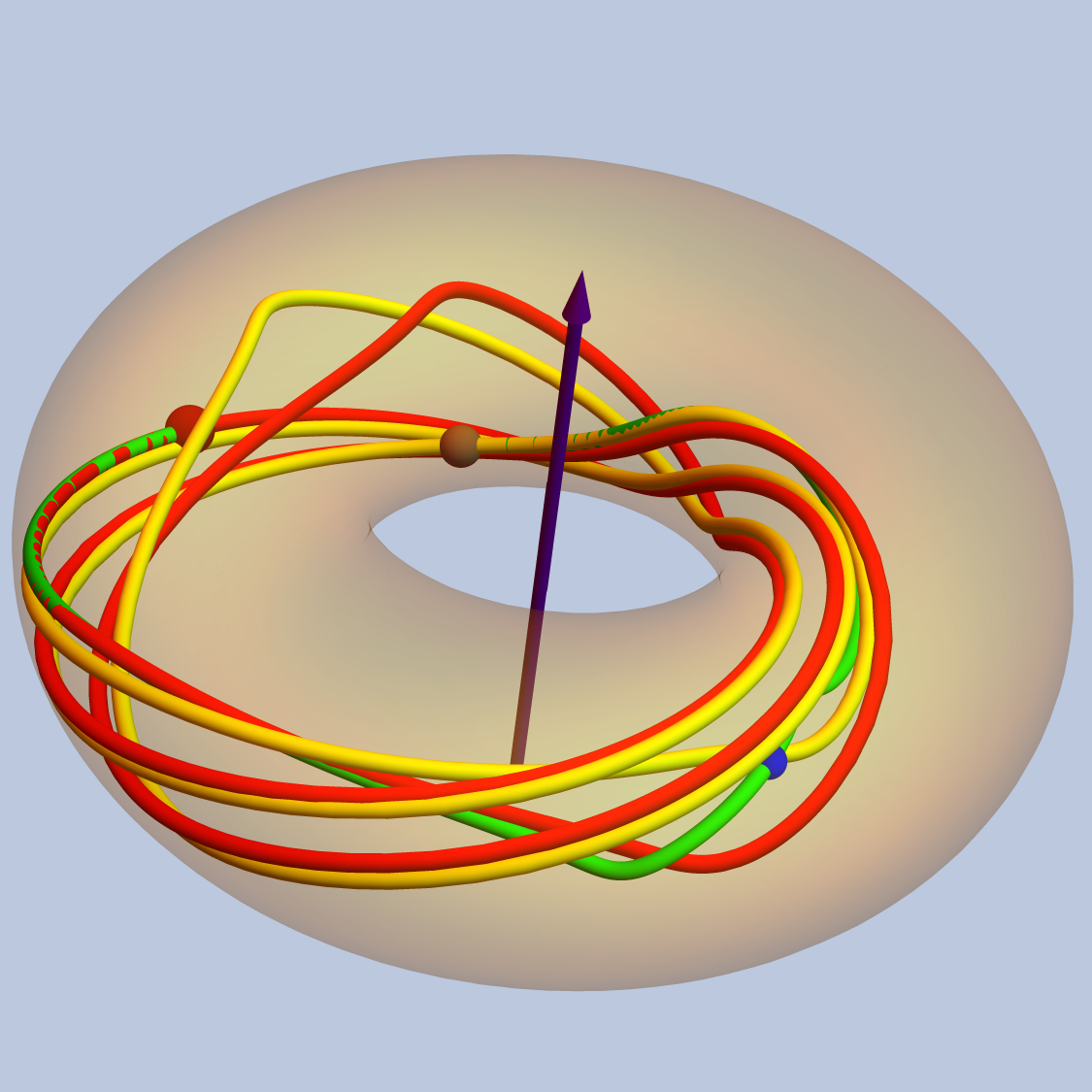}\quad
				\includegraphics[height=4cm,width=4cm]{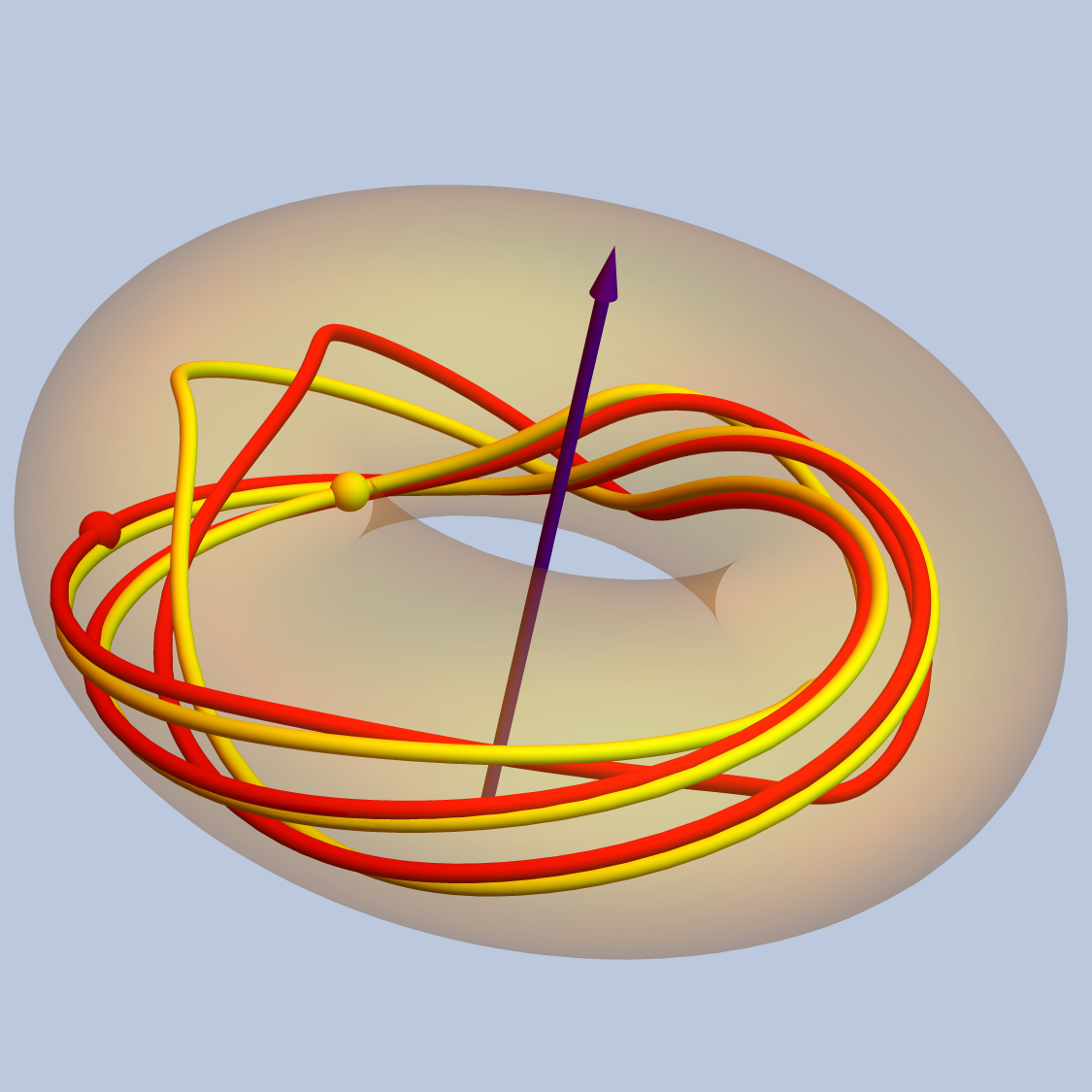}\quad
				\includegraphics[height=4cm,width=4cm]{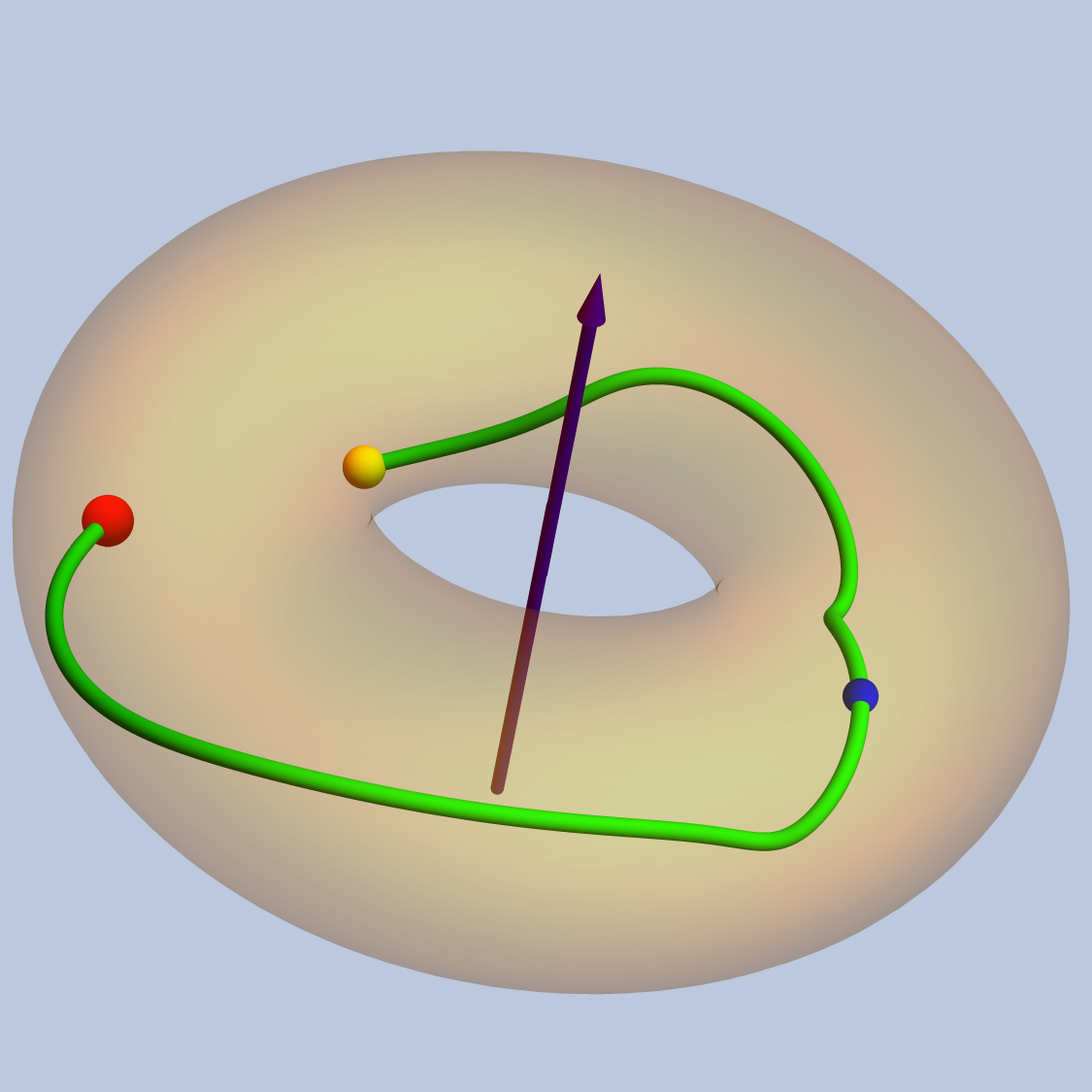}
			\caption{Left: The null curve $s\in\R \longmapsto {\mathcal T}_{\xi_{\lambda},f_{m,n}}(\gamma(s,0))$. Center: The two disjoint, but congruent torus knots. Right: The arc connecting the two torus knots. For this example, we have fixed $m=4$, $n=1$ and $p=1.4$.}  \label{example4}
	\end{figure}
	
	Finally, we build $\mathcal{T}$-transforms for $\chi=0$ of $\mathcal{T}_{\xi_\lambda,f_{m,n}}(\gamma)$. For this, we fix $r>0$ and a constant~${\xi_\omega\neq 0}$ such that $\cosh(2\xi_\omega)=\omega_{p,r}$. Using (\ref{recon}), we compute the extended frames $\widetilde{E}_\omega$ of $\widetilde{\bending}_{m,n}$ and the transforming function $\widetilde{f}_{m,n}$. Then, the ${\mathcal T}$-transform for $\chi=0$ of ${\mathcal T}_{\xi_{\lambda},f_{m,n}}(\gamma)$ with parameter  $\xi_{\omega}\neq 0$ and transforming function \smash{$\widetilde{f}_{m,n}$} is given by \eqref{TLIEN} and denoted by \smash{$\mathcal{T}_{\xi_\omega,\widetilde{f}_{m,n}}\bigl(\mathcal{T}_{\xi_\lambda,f_{m,n}}(\gamma)\bigr)$}. Moreover, the bending of \smash{$\mathcal{T}_{\xi_\omega,\widetilde{f}_{m,n}}\bigl(\mathcal{T}_{\xi_\lambda,f_{m,n}}(\gamma)\bigr)$} is the $2$-soliton solution $\widehat{\bending}_{m,n}$ of the KdV equation~\eqref{KdV2}. In this case, the evolution is not by rigid motions. However, as in the previous case,  the functions $s\in \R\longmapsto \widehat{\bending}_{m,n}(s,t)-\bending_{m,n}$ are rapidly decaying (see the picture on the right of Figure~\ref{example3}). Thus, the geometrical structure of the evolving curves is similar to the previous case (see Figure~\ref{example5}).
	
	In principle, the procedure can be inductively repeated to construct the iterated ${\mathcal T}$-transforms of $\gamma_{m,n}$.
\end{Example}

\begin{figure}[t]\centering
			\includegraphics[height=4cm,width=4cm]{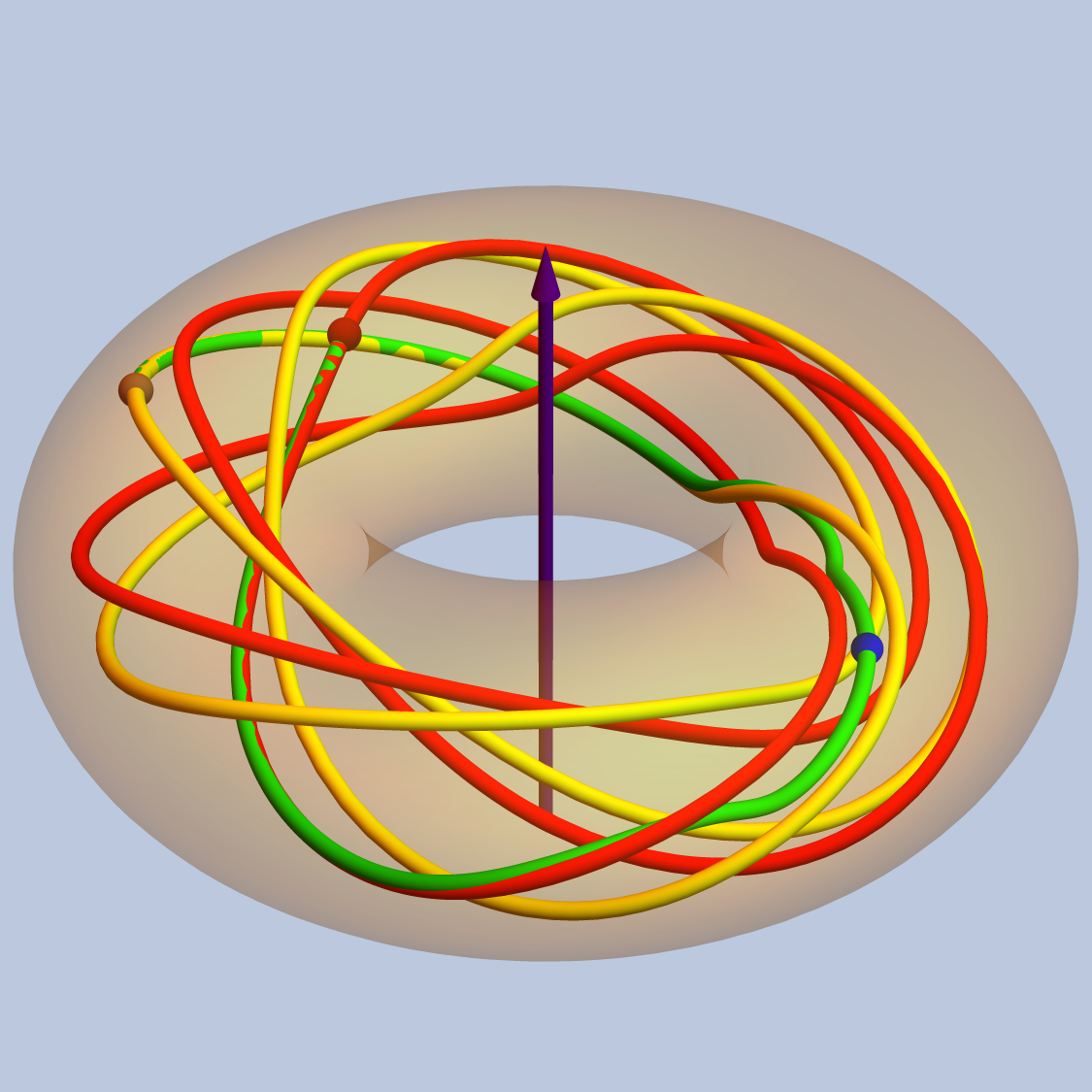}\quad
			\includegraphics[height=4cm,width=4cm]{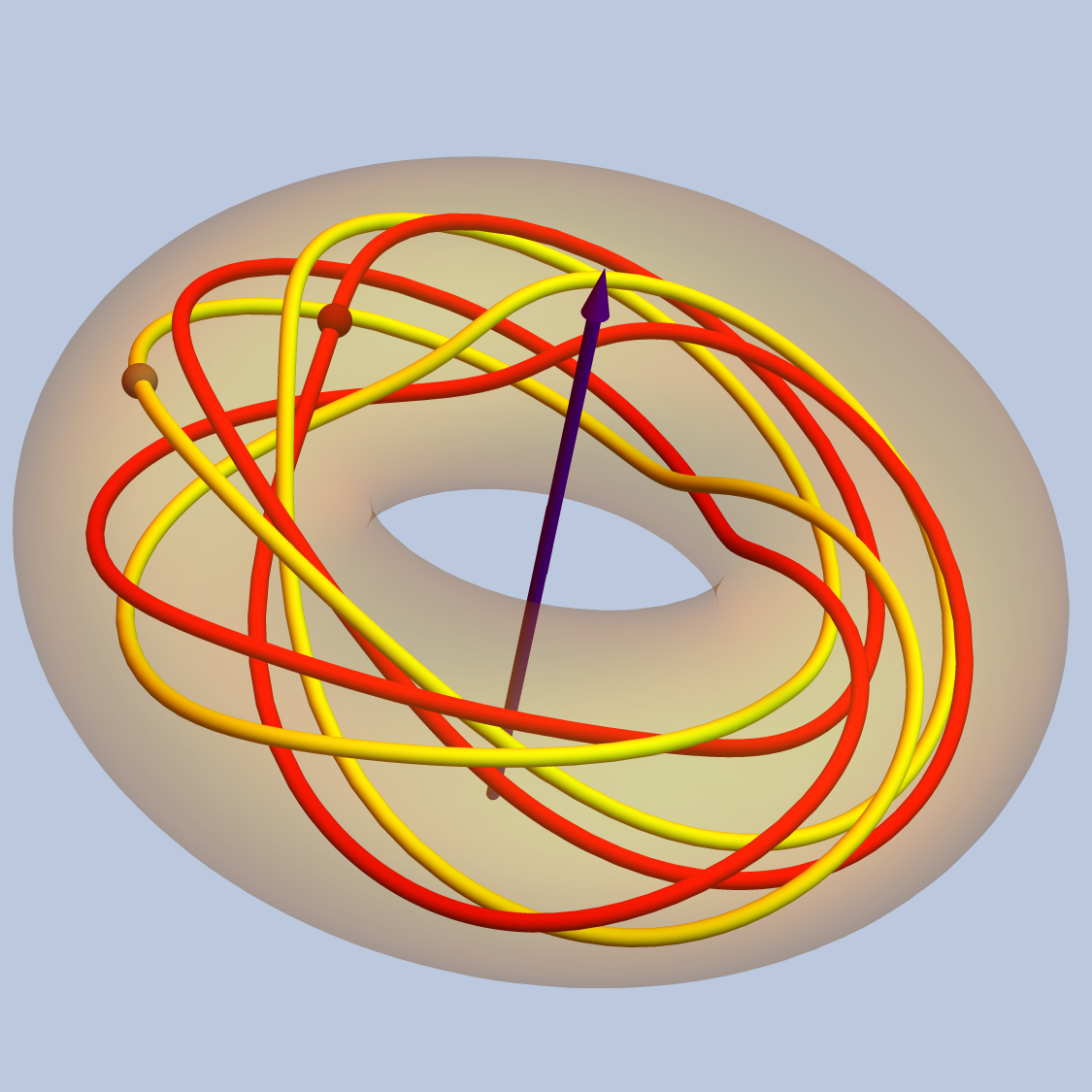}\quad
			\includegraphics[height=4cm,width=4cm]{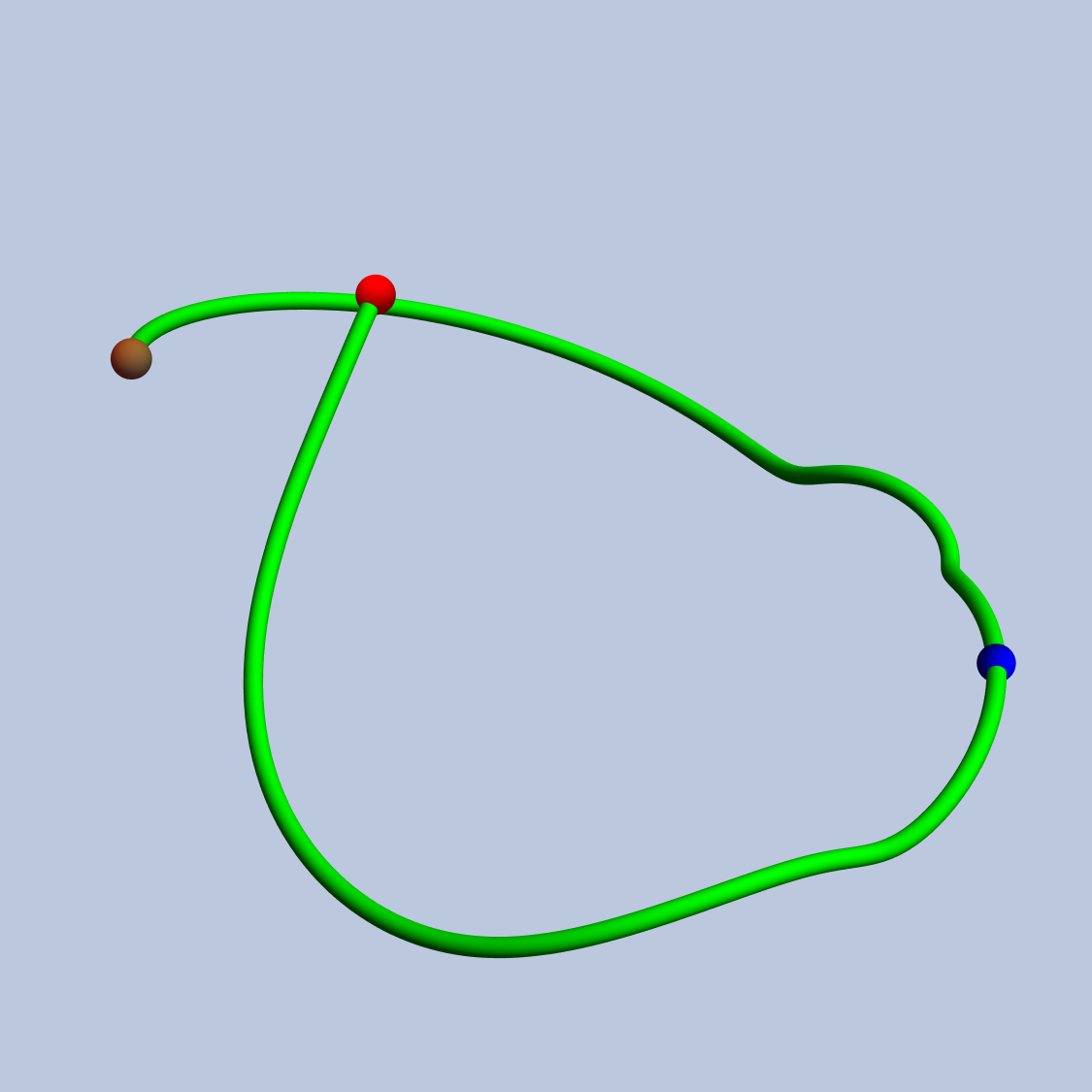}
		\caption{Left: The null curve $s\in\R \longmapsto \mathcal{T}_{\xi_\omega,\widetilde{f}_{m,n}}\bigl(\mathcal{T}_{\xi_\lambda,f_{m,n}}(\gamma(s,0))\bigr)$. Center: The two disjoint, but congruent torus knots. Right: The arc connecting the two torus knots. For this example, we have fixed $m=4$, $n=1$, $p=1.4$ and $r=1$.}  \label{example5}
\end{figure}

\subsection*{Acknowledgments}

Authors partially supported by PRIN 2017 and PRIN 2022 ``Real and Complex Manifolds: Topology, Geometry and Holomorphic Dynamics'' (prott. 2017JZ2SW5-004 and 2022AP8HZ9-003); and by the GNSAGA of INdAM. The authors gratefully acknowledge the warm hospitality of the Department of Mathematics at the Politecnico di Torino and of the Texas Tech University Center in Sevilla.
The authors would like to thank the referees for carefully reviewing the paper.

\pdfbookmark[1]{References}{ref}
\LastPageEnding

\end{document}